\newtheorem{thrm}{Theorem}[section]
\newtheorem{lem}[thrm]{Lemma}
\newtheorem{prop}[thrm]{Proposition}
\newtheorem{cor}[thrm]{Corollary}
\theoremstyle{definition}
\newtheorem{definition}[thrm]{Definition}
\newtheorem{remark}[thrm]{Remark}
\numberwithin{equation}{section}
\DeclareMathOperator{\Ker}{Ker}
\begin{document}

\title{Products of Ideals and Jet Schemes}
%\tnotetext[t1]{This work was supported by the Austrian Science Fund FWF grant Y464-N18 and NSF grants CCF-1563942, CCF-0952591, DMS-1606334.}
\tnotetext[t1]{Gleb Pogudin's current address: Courant Institute of Mathematical Sciences, New York University, USA.}

\author[gleb]{Gleb Pogudin}
\ead{pogudin.gleb@gmail.com}
\address[gleb]{
Institute for Algebra\\
Johannes Kepler University\\
4040, Linz, Austria}

\begin{abstract}
In the present paper, we give a full description of the jet schemes of the polynomial ideal $\left( x_1\cdots x_n \right) \in k[x_1, \ldots, x_n]$ over a field of zero characteristic.
We use this description to answer questions about  products and intersections of ideals emerged recently in  algorithmic studies of algebraic differential equations.
\end{abstract}

\begin{keyword}
jets schemes \sep algebraic differential equations \sep differential polynomials

\MSC[2010]12H20 \sep 12H05 \sep 14B10
\end{keyword}

\maketitle

\section{Introduction}
Properties of ideals in rings of differential polynomials are essential for the algorithmic study of algebraic differential equations
\cite{GustavsonKondratievaOvchinnikov,OvchinnikovPogudinVo}.
The following fact is an important part of the recent first bound for the effective differential elimination (\cite{OvchinnikovPogudinVo}). 
If $h_1, \ldots, h_n$ are natural numbers, then there exists $d$ such that for every ideals $I_1, \ldots, I_n$ in a commutative associative differential algebra over a field of zero characteristic
\begin{equation}\label{eq:intro_incl}
	\left( I_1^{(h_1)} \cdots I_n^{(h_n)} \right)^d \subset \left( I_1 \cdots I_n\right)^{(h_1 + \ldots + h_n)},
\end{equation}
where, for an ideal $I$, we denote the ideal generated by derivatives of elements of $I$ of order at most $h$ by $I^{(h)}$.
Such an inclusion allows us to reduce a problem about an arbitrary ideal to the problem about an ideal with additional useful properties (for example, to prime ideals in \cite{OvchinnikovPogudinVo}).
This kind of reduction is expected to have many potential applications to the algorithmic problems about algebraic differential equations.
The value of such $d$ was not needed for obtaining the bound in~\cite{OvchinnikovPogudinVo}, but we expect that it can be used for refining this and related bounds similarly to the way the Noether exponent was used in  \cite[Lemma 3.1]{GustavsonKondratievaOvchinnikov} and \cite[Lemma 5.5]{OvchinnikovPogudinVo}.
The problem of determining the number $d$ in \eqref{eq:intro_incl} as well as many other questions about products of ideals in differential algebras can be reduced to questions about the jet ideal of the polynomial ideal generated by $x_1 \cdots x_n \in k[x_1, \ldots, x_n]$ (see \cite[Lemma 6.2]{OvchinnikovPogudinVo}).

Recently, jet schemes were successfully applied to study singularities of algebraic varieties (for example, \cite{Mustata}).
In  \cite{GowardSmith,Yeun}, jet ideals of the ideal generated by $x_1 \cdots x_n$ were studied, minimal primes and their multiplicities were found.
In the context of differential algebra, jet schemes and related objects proved to be one of main tools in differential algebraic geometry (see \cite{MoosaScanlon,LeonSanchez}).

In the present paper, we generalize results of \cite{GowardSmith,Yeun} giving a full description of jet schemes of the ideal generated by $x_1 \cdots x_n$ (Section~\ref{sec:jets}), where an unexpected connection to Shubert calculus occurs (see Proposition~\ref{prop:nilpotent_nonzero}).
Then we use the obtained information in order to solve the original problem about differential algebraic equations, namely find the minimal possible number $d$ in~\eqref{eq:intro_incl} (see Corollary~\ref{cor:product_of_ideals} and Corollary~\ref{cor:intersection}).

We also note that our problem is connected to a classical membership problem for the differential ideal generated by $x_1 \ldots x_n$ (see \cite{Levi,HillmanEtAl,HillmanEtAl2}).
In particular, if a differential polynomial belongs to any of the jet ideals of the polynomial ideal generated by $x_1 \cdots x_n$, then it belongs to the corresponding differential ideal, and if a differential polynomial does not belong to any of these jet ideals, it does not belong to the differential ideal.
Our structural results give an effective way for deciding if any of these situations takes place (see Remark~\ref{rem:levi}).

%%%%%%%%%%%%%%%%%%%%%%%%%%%%%%%%%%%%%%%%%%%%%%%%%%%%%%%%%%%%%%%%%%%%

\section{Preliminaries}\label{sec:preliminaries}

\subsection{Differential algebra}

	Throughout the paper, all fields are assumed to be of characteristic zero.
    Unless otherwise stated, all algebras are commutative, associative, and with unity.
	
    Let $R$ be a ring.
    A map $D\colon R \to R$ satisfying $D(a + b) = D(a) + D(b)$ and $D(ab) = aD(b) + D(a)b$ for all $a, b \in R$ is called a \textit{derivation}.
	A \textit{differential ring} $R$ is a ring with a specified derivation.
    In this case, we will denote $D(x)$ by $x^{\prime}$ and $D^n(x)$ by $x^{(n)}$.
	A differential ring that is a field will be called a \textit{differential field}.
	
    A differential ring $A$ is said to be a \textit{differential $K$-algebra} over a differential field $K$ if $A$ is a $K$-algebra and the restriction of the derivation of $A$ on $K$ coincides with the derivation on $K$.
	An ideal $I$ of a differential ring $R$ is said to be a \textit{differential ideal} if $a^{\prime} \in I$ for all $a \in I$.
	The differential ideal generated by $a_1, \ldots, a_n \in I$ will be denoted by $[a_1, \ldots, a_n]$.
    	
    Let $A$ be a differential $K$-algebra.
	We consider the polynomial ring \[A[x, x^{\prime}, \ldots, x^{(n)}, \ldots],\] where $x, x^{\prime}, x^{\prime\prime}, \ldots$ are algebraically independent variables.
	Extending the derivation from $A$ to $A[x, x^{\prime}, \ldots]$ by $D(x^{(n)}) = x^{(n + 1)}$ we obtain a differential algebra.
	This algebra is called the \textit{algebra of differential polynomials} in $x$ over $A$ and we denote it by $A\{ x\}$.
	Iterating this construction, we define the algebra of differential polynomials in variables $x_1, \ldots, x_n$ over $A$ and denote it by $A\{x_1, \ldots, x_n\}$.

\subsection{Jet Schemes}

	For general scheme-theoretic treatment, we refer reader to \cite{Mustata} (the same object is called the arc space in \cite{MoosaScanlon}).
    We will need only coordinate description of the jet ideal of the polynomial ideal.
    
		Let $I = (g_1, \ldots, g_s)$ be an ideal in the polynomial ring $k[x_1, \ldots, x_n]$ and $m \in \mathbb{Z}_{\geqslant 0} \cup \{ \infty\}$.
        By $J_m(I)$ we will denote the $m$-th jet ideal of $I$ which is an ideal in the polynomial ring
        $$
        R_m = k\left[ x_1^{(0)}, x_1^{(1)}, \ldots, x_1^{(m)}, x_2^{(0)}, \ldots, x_n^{(m)}  \right]
        $$
        generated by coefficients of the following truncated power series
        $$
        g_i\left( x_1^{(0)} + x_1^{(1)}t + \ldots + x_1^{(m)}t^m, \ldots, x_n^{(0)} + x_n^{(1)}t + \ldots + x_n^{(m)}t^m \right) \in k[[t]] / (t^{m + 1}),
        $$
        where $i = 1, \ldots, s$.
    
    For example, the $0$-th jet ideal of $I$ coincides with $I$ after renaming $x_i^{(0)}$ into $x_i$ for all $i$.
    
    \begin{remark}\label{rem:jet_to_diff}
    Let us consider the infinite jet ring $R_{\infty}$ over a differential field $K$.
    It can be endowed with the structure of differential $K$-algebra by $\left( x_i^{(j)}\right)^{\prime} = (j + 1)x_i^{(j + 1)}$.
    Moreover, the differential algebra obtained by this procedure is differentially isomorphic to the algebra of differential polynomials $K\{ y_1, \ldots, y_n \}$ via $\varphi\left( x_i^{(j)} \right) = \frac{y_i^{(j)}}{j!}$.
    Then if coefficients of $g_1, \ldots, g_s$ are constants in $K$, the infinite jet ideal $J_{\infty}(I)$ corresponds to a differential ideal generated by $g_1(y_1, \ldots, y_n), \ldots, g_s(y_1, \ldots, y_n)$ in $K\{ y_1, \ldots, y_n\}$.
    This correspondence will allow us to apply results about jet schemes to problems about algebraic differential equations.
    \end{remark}

%%%%%%%%%%%%%%%%%%%%%%%%%%%%%%%%%%%%%%%%%%%%%%%%%%%%%%%%%%%%%%%%%%%%%%

\section{Facts about symmetric polynomials}\label{sec:symmetric}

	In this section we collect facts about the ideal generated by symmetric polynomials with zero constant term, which plays a central role in Shubert calculus (see~\cite{Manivel, Monk}).
    We also introduce algebras $A_{\lambda}(k)$ (Definition~\ref{def:main}) which locally describe the jet schemes of the ideal $(x_1\cdots x_n)$ (Section~\ref{sec:jets}).
    
	Consider the algebra $k[z_1, \ldots, z_\ell]$ of polynomials over $k$.
    By $\mathbf{z}$ we denote the tuple $(z_1, \ldots, z_\ell)$.
    By $h_d(\mathbf{z})$ we denote \textit{the complete symmetric polynomial of degree $d$} which is a sum of all monomials of degree $d$ in $z_1, \ldots, z_\ell$:
    $$
    h_d(\mathbf{z}) = \sum\limits_{ i_1 \leqslant i_2 \leqslant \ldots \leqslant i_d } z_{i_1} z_{i_2} \cdots z_{i_d}.
    $$
	
    By $\mathbf{z}_k$ we denote the tuple $(z_k, \ldots, z_\ell)$.
    For example, $\mathbf{z}_1 = \mathbf{z}$, $\mathbf{z}_\ell = (z_\ell)$, and $h_d(\mathbf{z}_\ell) = z_\ell^d$.
    By $I_S \subset k[z_1, \ldots, z_\ell]$ we denote the ideal generated by all symmetric polynomials with respect to $z_1, \ldots, z_\ell$ with zero constant term.
    We will use the following lemma which follows from Proposition~2.1 and Remark~2.1 from \cite{MoraSala}.
    
    \begin{lem}\label{lem:grobner_basis}
		Polynomials $h_1(\mathbf{z}_1), h_{2}(\mathbf{z}_{2}), \ldots, h_{\ell}(\mathbf{z}_\ell)$ constitute a Gr\"obner basis of $I_S$ with respect to lexicographical ordering $z_1 > z_{2} > \ldots > z_\ell$.
        
        Also, for every $j \geqslant i$, the polynomial $h_{j}(\mathbf{z}_i)$ lies in $I_S$.
	\end{lem}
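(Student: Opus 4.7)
The plan is to prove the two assertions with the membership claim first, since the Gr\"obner basis argument will invoke its $j = i$ specialisation. Throughout I write $R_\ell$ for $k[z_1, \ldots, z_\ell]$.

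For the second assertion ($h_j(\mathbf{z}_i)\in I_S$ whenever $j\geqslant i$), I induct on $i$. The base $i=1$ is immediate, because $h_j(\mathbf{z}_1) = h_j(\mathbf{z})$ is a symmetric polynomial of positive degree $j$, hence belongs to $I_S$ by definition. For the inductive step the key identity is
\[
h_j(\mathbf{z}_{i-1}) \;=\; h_j(\mathbf{z}_i) + z_{i-1}\,h_{j-1}(\mathbf{z}_{i-1}),
\]
obtained by splitting the degree-$j$ monomials in $z_{i-1},\ldots,z_\ell$ according to whether $z_{i-1}$ appears. Solving for $h_j(\mathbf{z}_i)$ and noting that $j\geqslant i$ forces both $j$ and $j-1$ to be $\geqslant i-1$, the inductive hypothesis places the right-hand side in $I_S$.

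For the Gr\"obner basis statement, the leading monomial of $h_i(\mathbf{z}_i)$ under the lex order $z_1 > \cdots > z_\ell$ is $z_i^i$, because that is the lex-largest degree-$i$ monomial in the variables $z_i, \ldots, z_\ell$. These leading monomials $z_1, z_2^2, \ldots, z_\ell^\ell$ are pairwise coprime, so by the standard Buchberger criterion every $S$-polynomial in the family reduces to zero, and therefore $h_1(\mathbf{z}_1), \ldots, h_\ell(\mathbf{z}_\ell)$ is a Gr\"obner basis of the ideal $J$ it generates. By the previous step (with $j = i$), $J \subseteq I_S$. To upgrade this containment to equality I count standard monomials: those $z_1^{a_1}\cdots z_\ell^{a_\ell}$ with $0\leqslant a_i<i$, numbering exactly $\ell!$, so $\dim_k R_\ell/J=\ell!$. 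On the other hand, $I_S$ coincides with $(e_1(\mathbf{z}),\ldots,e_\ell(\mathbf{z}))$ and the elementary symmetric polynomials form a regular sequence of degrees $1,\ldots,\ell$, whence $R_\ell/I_S$ is a complete intersection also of dimension $\ell!$. Combined with $J\subseteq I_S$, this forces $J=I_S$ and completes the proof.

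The only non-elementary input is the final dimension equality for $R_\ell/I_S$, which rests on the classical fact that $e_1(\mathbf{z}),\ldots,e_\ell(\mathbf{z})$ is a regular sequence (equivalently, the coinvariant algebra of $S_\ell$ has dimension $\ell!$). This is the main obstacle in the sense that, to keep the proof self-contained, one would need to insert a short verification of this well-known fact; the rest of the argument reduces to bookkeeping around the identity $h_j(\mathbf{z}_{i-1}) = h_j(\mathbf{z}_i) + z_{i-1}h_{j-1}(\mathbf{z}_{i-1})$ and the coprimality of the leading monomials.
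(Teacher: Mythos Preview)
Your proof is correct. The paper does not actually prove this lemma at all: it simply states that the result ``follows from Proposition~2.1 and Remark~2.1 from \cite{MoraSala}'' and moves on. So your argument is not just a different route---it is the only complete argument on offer.

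Your approach is clean and standard: the recursion $h_j(\mathbf{z}_{i-1}) = h_j(\mathbf{z}_i) + z_{i-1}h_{j-1}(\mathbf{z}_{i-1})$ handles the membership claim by induction on $i$, and for the Gr\"obner basis part the observation that the leading terms $z_1, z_2^2, \ldots, z_\ell^\ell$ are pairwise coprime lets Buchberger's first criterion do all the work, after which the dimension count $\dim_k R_\ell/J = \ell! = \dim_k R_\ell/I_S$ forces $J = I_S$. The only external input, as you correctly flag, is $\dim_k R_\ell/I_S = \ell!$; this is the classical dimension of the coinvariant algebra of $S_\ell$, and is equivalent to the statement that $e_1, \ldots, e_\ell$ form a regular sequence (or, via the fundamental theorem of symmetric polynomials, that $R_\ell$ is free of rank $\ell!$ over $k[e_1, \ldots, e_\ell]$). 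If you want the write-up to be fully self-contained you should insert one sentence pointing to this fact, but for the purposes of the present paper a citation is entirely in keeping with how the authors themselves treat the lemma.
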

    
    Throughout the rest of the section, we will refer to the basis from Lemma~\ref{lem:grobner_basis} as the basis of $I_S$.
    
    \begin{lem}\label{lem:symmetric_reduction}
		Let $p(\mathbf{z}) \in k[z_1, \ldots, z_\ell]$, and $d \geqslant 1$.
        By $\tilde{p}$ we denote the reduction of $p$ with respect to the Gr\"obner basis of $I_S$ with respect to lexicographic ordering with $z_1 > z_2 > \ldots > z_\ell$.
        Then
        $$p(z_1, \ldots, z_s, z_{s + 1}, \ldots, z_\ell) = p(z_1, \ldots, z_{s + 1}, z_{s}, \ldots, z_\ell)$$
        implies 
        $$\tilde{p}(z_1, \ldots, z_s, z_{s + 1}, \ldots, z_\ell) = \tilde{p}(z_1, \ldots, z_{s + 1}, z_{s}, \ldots, z_\ell).$$
	\end{lem}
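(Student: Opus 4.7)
The plan is to use the divided difference operator associated with the transposition $\sigma = (z_s,\, z_{s+1})$. Define
\[
\partial(f) := \frac{f - \sigma(f)}{z_s - z_{s+1}}.
\]
Since $f - \sigma(f)$ is antisymmetric in $z_s, z_{s+1}$, it is divisible by $z_s - z_{s+1}$, so $\partial$ is a well-defined operator on $k[z_1,\ldots,z_\ell]$. Its key property is that $\partial(f) = 0$ if and only if $\sigma(f) = f$; thus the hypothesis of the lemma becomes $\partial(p) = 0$ and the conclusion becomes $\partial(\tilde p) = 0$.

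First I would verify the twisted Leibniz rule $\partial(fg) = \partial(f)\sigma(g) + f\partial(g)$ by the standard add-and-subtract trick. Since $I_S$ is generated (as an ideal) by polynomials that are symmetric in all of $z_1,\ldots,z_\ell$, and every such polynomial is killed by $\partial$, this identity immediately gives $\partial(I_S) \subseteq I_S$.

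Second I would show that $\partial$ also preserves the $k$-linear span $V$ of reduced monomials $z_1^{a_1} \cdots z_\ell^{a_\ell}$ with $0 \leqslant a_i \leqslant i-1$. Since $\partial$ acts trivially on $z_i$ for $i \neq s, s+1$, it suffices to analyze $\partial(z_s^{a_s} z_{s+1}^{a_{s+1}})$. Using the standard expansion of $(z_s^a z_{s+1}^b - z_s^b z_{s+1}^a)/(z_s - z_{s+1})$, each resulting monomial $z_s^c z_{s+1}^d$ satisfies $c, d \leqslant \max(a_s, a_{s+1}) - 1 \leqslant s-1$, which is within the reducedness constraints (note $a_s \leqslant s-1$ and $a_{s+1} \leqslant s$ for a reduced monomial, so $\max - 1 \leqslant s-1$ in either case). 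Hence $\partial(V) \subseteq V$.

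Finally, given $\sigma$-invariant $p$, decompose $p = \tilde p + r$ with $r \in I_S$. Then $0 = \partial(p) = \partial(\tilde p) + \partial(r)$, so $\partial(\tilde p) \in I_S$. By the second step, $\partial(\tilde p)$ also lies in $V$, and since $V \cap I_S = \{0\}$ by uniqueness of Gr\"obner normal forms (Lemma~\ref{lem:grobner_basis}), we conclude $\partial(\tilde p) = 0$, i.e., $\sigma(\tilde p) = \tilde p$. The main obstacle is recognizing that the divided difference operator --- the natural device suggested by the Schubert-calculus flavour of the setup --- is the right tool, and then carefully verifying that it preserves $V$; the rest is a formal consequence.
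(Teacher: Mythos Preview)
Your proof is correct and is genuinely different from the paper's argument. The paper proceeds by an explicit reduction: starting from a $\sigma$-symmetric $p$ whose leading monomial is divisible by some $g_i$ in the Gr\"obner basis, it constructs by hand a $\sigma$-symmetric element of $I_S$ whose subtraction strictly lowers the leading monomial. This requires a case split on whether $i\in\{s,s+1\}$ or not, and the case $i=s$ involves a somewhat delicate combinatorial verification that a certain combination of complete homogeneous polynomials is symmetric in $z_s,z_{s+1}$.

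Your route via the divided difference $\partial$ replaces all of this case analysis by two short invariance checks, $\partial(I_S)\subset I_S$ and $\partial(V)\subset V$, plus the direct-sum decomposition $k[z_1,\dots,z_\ell]=V\oplus I_S$. The first check is immediate from the twisted Leibniz rule since $I_S$ has $\sigma$-invariant generators; the second follows from the bound $c,d\leqslant\max(a_s,a_{s+1})-1\leqslant s-1$ on the exponents appearing in $\partial(z_s^{a_s}z_{s+1}^{a_{s+1}})$, which you verified correctly. This argument is cleaner, and it also resonates with the Schubert-calculus machinery used later in the paper (divided differences are exactly the operators defining Schubert polynomials). The paper's approach, on the other hand, is entirely self-contained and makes the symmetry of the reduction process explicit step by step, which may be preferable if one wants to avoid introducing $\partial$ solely for this lemma.
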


	\begin{proof}
    	By $g_i$ we denote the $i$-th element of the Gr\"obner basis of $I_S$.
        Lemma~\ref{lem:grobner_basis} implies that there are $\ell$ of them, and $g_i = h_{i}(\mathbf{z}_i)$.
		We will prove that if $p$ is not reduced with respect to $g_1, \ldots, g_\ell$, then there exists $q(\mathbf{z})$ such that the leading monomial of $q$ is lower then the leading monomial of $p$, $q$ is also symmetric with respect to $z_s$ and $z_{s + 1}$, and $p - q \in I_S$.
        We can not apply this procedure infinitely many times, so at some point we will obtain $q$ such that $q$ is symmetric with respect to $z_s$ and $z_{s + 1}$ and reduced with respect to $g_1, \ldots, g_\ell$.
        The uniqueness of the reduction with respect to a Gr\"obner basis implies that $q = \tilde{p}$.
        
        So, assume that $p$ is not reduced with respect to $I_S$.
        Since the reduction with respect to a Gr\"obner basis 
        is always equal to the lead-reduction, 
        the leading monomial of $p$ is not reduced with respect to $g_i$ for some $i$.
        Let the leading monomial of $p$ be $M_1 = z_1^{d_1}\cdots z_\ell^{d_\ell}$.
        Due to the symmetry, the monomial $M_2 = z_1^{d_1}\cdots z_s^{d_{s + 1}} z_{s + 1}^{d_s} \cdots z_\ell^{d_\ell}$ occurs in $p$ with the same coefficient, say $c$.
        Since $M_1$ is greater or equal then $M_2$, $d_s \geqslant d_{s + 1}$.
        We consider three cases
        \begin{enumerate}[noitemsep, topsep=0pt]
			\item $i \neq s$ and $i \neq s + 1$.
            Let us consider 
            \[
            q = 
            \begin{cases}
            p - c\frac{M_1 + M_2}{z_i^{i}} g_i, \text{ if } d_s > d_{s + 1},\\
            p - c\frac{M_1}{z_i^{i}} g_i, \text{ if } d_s = d_{s + 1}.
            \end{cases}
            \]
            The terms $cM_1$ cancel, so the leading monomial of $q$ is less then $M_1$.
            Moreover, $q$ is still symmetric with respect to $z_s$ and $z_{s + 1}$.
            
            \item $i = s$.
            Since $d_s \geqslant s$, Lemma~\ref{lem:grobner_basis} implies that $h_{d_s}(\mathbf{z}_s), h_{d_s + i}(\mathbf{z}_{s + 1}) \in I_S$ for every $i > 0$.
            By $N$ we denote $\frac{M_1}{z_s^{d_s} z_{s + 1}^{d_{s + 1}}}$.
            Consider
            $$
            q = p - cN \left( z_{s + 1}^{d_{s + 1}} h_{d_s}(\mathbf{z}_s) - \sum\limits_{i = 1}^{d_{s + 1}} z_s^{d_{s + 1} - i} h_{d_{s} + i}(\mathbf{z}_{s + 1}) \right).
            $$
            We denote $z_{s + 1}^{d_{s + 1}} h_{d_s}(\mathbf{z}_s)$ and $\sum\limits_{i = 1}^{d_{s + 1}} z_s^{d_{s + 1} - i} h_{d_{s} + i}(\mathbf{z}_{s + 1})$ by $f_1$ and $f_2$, respectively.
            Since $d_s \geqslant d_{s + 1}$, the leading monomial of $f_1 - f_2$ is $z_{s}^{d_s} z_{s + 1}^{d_{s + 1}}$.
            Hence, the leading monomial of $q$ is less than the leading monomial of $p$.
            Since $N$ does not involve neither $z_s$, nor $z_{s + 1}$, it is sufficient to prove that $f_1 - f_2$ is symmetric with respect to $z_{s}$ and $z_{s + 1}$.
            The polynomial $f_1$ is a sum of all monomials $S$ in $\mathbf{z}_s$ of the total degree $d_s + d_{s + 1}$ with $\deg_{z_{s + 1}} S \geqslant d_{s + 1}$.
            The polynomial $f_2$ is a sum of all monomials $S$ in $\mathbf{z}_s$ of the total degree $d_s + d_{s + 1}$ with $\deg_{z_s} S < d_{s + 1}$.
            Let $S_1 = z_1^{e_1} \cdots z_\ell^{e_\ell}$ be an arbitrary monomial of the total degree $d_s + d_{s + 1}$, 
            and $S_2 = z_1^{e_1}\cdots z_s^{e_{s + 1}} z_{s + 1}^{e_s} \cdots z_\ell^{e_\ell}$ is the monomial symmetric to $S_1$ with respect to $z_s$ and $z_{s + 1}$.
            We also assume that $e_s > e_{s + 1}$.
            It is sufficient to prove that $S_1$ and $S_2$ occur in $f_1 - f_2$ with the same coefficient.
            There are three cases:
            \begin{enumerate}
				\item $e_s > e_{s + 1} \geqslant d_{s + 1}$. Then $S_1$ and $S_2$ occur in $f_1$, but none of them occurs in $f_2$.
                \item $d_{s + 1} > e_s > e_{s + 1}$. Then $S_1$ and $S_2$ occur in $f_2$, but none of them occurs in $f_1$.
                \item $e_s \geqslant d_{s + 1} > e_{s + 1}$. Then $S_2$ occurs both in $f_1$ and $f_2$, so $f_1 - f_2$ does not involve $S_2$.
                Also, $S_1$ does not occur in $f_1$ and $f_2$.
			\end{enumerate}
            
            \item $i = s + 1$.
            Since $d_s \geqslant d_{s + 1}$, $p$ is also not reduced with respect to $g_s$, so this case can be reduced to the previous.
		\end{enumerate}
	\end{proof}

	\begin{definition}\label{def:main}
    	Let us remind that by $I_S$ we denote the ideal of $k[z_1, \ldots, z_\ell]$ generated by symmetric polynomials with respect to $z_1, \ldots, z_\ell$ without constant term.
        For the rest of the section, we fix a positive integer $n$.
	    For the vector $\lambda = (\lambda_1, \ldots, \lambda_n) \in \mathbb{Z}_{\geqslant 0}^n$, by $\Lambda_i$ we denote the partial sum $\lambda_1 + \ldots + \lambda_{i - 1}$, assuming also $\Lambda_1 = 0$.
	    Assume that $\Lambda_{n + 1} = \lambda_1 + \ldots + \lambda_n = \ell$.
        We will say that a polynomial $p \in k[z_1, \ldots, z_\ell]$ is $\lambda$-symmetric if it is symmetric with respect to $z_{\Lambda_i + 1}, \ldots, z_{\Lambda_{i + 1}}$ for every $1 \leqslant i \leqslant n$.
        The notion of $\lambda$-symmetric polynomials generalizes the notion of block-symmetric polynomials~\cite{BlockSymmetric}.
	\end{definition}
    
    For Lemmas~\ref{lem:ideal} and~\ref{lem:fundamental}, we fix some $\lambda = (\lambda_1, \ldots, \lambda_n) \in \mathbb{Z}_{\geqslant 0}^n$ such that $\lambda_1 + \ldots + \lambda_n = \ell$.
    We denote the elementary symmetric polynomial of degree $d$ in $t_1, \ldots, t_r$ 
    by $\sigma_d(t_1, \ldots, t_r)$
    for any variables $t_1, \ldots, t_r$ and $1 \leqslant d \leqslant r$.
       
    \begin{lem}\label{lem:ideal}
      Let $p \in k[z_1, \ldots, z_\ell]$ be a $\lambda$-symmetric polynomial belonging to $I_S$.
      Then there exist $\lambda$-symmetric polynomials $a_1, \ldots, a_\ell \in k[z_1, \ldots, z_\ell]$
      such that
      \[
      p = a_1\sigma_1(z_1, \ldots, z_{\ell}) + \ldots + a_\ell\sigma_\ell(z_1, \ldots, z_{\ell}).
      \]
    \end{lem}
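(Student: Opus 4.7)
The plan is to combine a consequence of the fundamental theorem of symmetric polynomials with an averaging argument in characteristic zero.

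First, I would establish that $I_S = (\sigma_1(z_1,\ldots,z_\ell), \ldots, \sigma_\ell(z_1,\ldots,z_\ell))$. The inclusion $\supset$ is clear since each $\sigma_i$ is symmetric with zero constant term. For the reverse direction, any generator of $I_S$ is a symmetric polynomial $s$ with $s(0,\ldots,0) = 0$; by the fundamental theorem of symmetric polynomials, $s = P(\sigma_1,\ldots,\sigma_\ell)$ for a unique polynomial $P$, and substituting $z_j = 0$ gives $P(0,\ldots,0) = 0$, so $P$ lies in the ideal generated by the coordinates of its argument, whence $s \in (\sigma_1,\ldots,\sigma_\ell)$. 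In particular, our $\lambda$-symmetric $p \in I_S$ admits \emph{some} expression $p = \sum_{i=1}^\ell a_i \sigma_i(z_1,\ldots,z_\ell)$ with $a_i \in k[z_1,\ldots,z_\ell]$; but the $a_i$ produced this way need not be $\lambda$-symmetric — otherwise the lemma would be immediate.

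Next, let $G = S_{\lambda_1} \times \cdots \times S_{\lambda_n} \subset S_\ell$ be the Young subgroup acting on $k[z_1,\ldots,z_\ell]$ by permuting the variables within each block $\{z_{\Lambda_i+1}, \ldots, z_{\Lambda_{i+1}}\}$; a polynomial is $\lambda$-symmetric precisely when it is $G$-invariant. Since each $\sigma_i$ is $S_\ell$-invariant, and hence $G$-invariant, applying any $g \in G$ to the identity $p = \sum_i a_i \sigma_i$ yields $p = \sum_i (g \cdot a_i)\sigma_i$. Averaging over $G$ — which is legitimate because $|G|$ is invertible in a field of characteristic zero — the polynomials
\[
\bar a_i \;=\; \frac{1}{|G|}\sum_{g \in G} g \cdot a_i
\]
are $\lambda$-symmetric by construction and satisfy $p = \sum_i \bar a_i \, \sigma_i(z_1,\ldots,z_\ell)$, which is the conclusion.

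I do not expect any real obstacle here. The only two ingredients are the fundamental theorem of symmetric polynomials (used to identify $I_S$ with the ideal generated by $\sigma_1,\ldots,\sigma_\ell$) and the standard Reynolds-operator trick that converts an arbitrary lift into a $G$-invariant one. Notably, Lemma~\ref{lem:symmetric_reduction} is \emph{not} used for this argument — its role lies in the Gröbner-reduction analysis done separately; the present statement is about ideal-theoretic structure with respect to the elementary symmetric generators, which is best handled by pure averaging.
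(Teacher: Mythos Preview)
Your proof is correct and is essentially the same as the paper's: both write $p = \sum b_i \sigma_i$ using that $I_S = (\sigma_1,\ldots,\sigma_\ell)$ and then apply the Reynolds operator for $G = S_{\lambda_1}\times\cdots\times S_{\lambda_n}$ to symmetrize the coefficients. The only difference is cosmetic --- you spell out why $I_S$ is generated by the elementary symmetric polynomials, whereas the paper takes this for granted.
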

    
    \begin{proof}
      Since $p \in I_S$, there exist $b_1, \ldots, b_\ell \in k[z_1, \ldots, z_{\ell}]$ such that 
      \begin{equation}\label{eq:ideal_raw}
      p = b_1\sigma_1(z_1, \ldots, z_{\ell}) + \ldots + b_\ell\sigma_\ell(z_1, \ldots, z_{\ell}).
      \end{equation}
      Let $G := S_{\lambda_1} \times \ldots \times S_{\lambda_n} \subset S_\ell$ be a subgroup in the group of all permutations of $z_1, \ldots, z_\ell$.
      We consider a linear operator $\operatorname{Sym}_\lambda$ on $k[z_1, \ldots, z_m]$ defined by 
      \[
      \operatorname{Sym}_\lambda (q) := \frac{1}{|G|} \sum\limits_{\sigma \in G} \sigma(q), \text{ for } q \in k[z_1, \ldots, z_\ell].
      \]
      Then we have $\operatorname{Sym}_\lambda (a b) = a \operatorname{Sym}_\lambda (b)$ for every $\lambda$-symmetric $a$ and every $b \in k[z_1, \ldots, z_\ell]$. Moreover $\operatorname{Sym}_\lambda (b)$ is $\lambda$-symmetric for every $b \in k[z_1, \ldots, z_\ell]$.
      Applying $\operatorname{Sym}_\lambda$ to both sides of~\eqref{eq:ideal_raw}, we obtain
      \[
      p = \operatorname{Sym}_\lambda(b_1)\sigma_1(z_1, \ldots, z_{\ell}) + \ldots + \operatorname{Sym}_\lambda(b_\ell)\sigma_\ell(z_1, \ldots, z_{\ell}).
      \]
      Since every polynomial in the image of $\operatorname{Sym}_\lambda$ is $\lambda$-symmetric, we set $a_i := \operatorname{Sym}_\lambda(b_i)$ for every $1 \leqslant i \leqslant \ell$.
    \end{proof}
    
    \begin{lem}\label{lem:fundamental}
      Let $p \in k[z_1, \ldots, z_\ell]$ be a $\lambda$-symmetric polynomial.
      Then $p$ belongs to the subalgebra generated by
      \[
      \left\{ \sigma_j(z_{\Lambda_i + 1}, \ldots, z_{\Lambda_{i + 1}}) \mid 1 \leqslant i \leqslant n, 1 \leqslant j \leqslant \lambda_i \right\}.
      \]
    \end{lem}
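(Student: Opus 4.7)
\medskip

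\noindent\textbf{Proof plan for Lemma~\ref{lem:fundamental}.}
The plan is to proceed by induction on the number $n$ of blocks, using the classical fundamental theorem of symmetric polynomials one block at a time. The only genuine point to check is that ``peeling off'' the first block preserves symmetry in the remaining ones.

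For the base case $n = 1$, the polynomial $p$ is symmetric in $z_1,\ldots,z_\ell$, so the fundamental theorem of symmetric polynomials writes $p$ as a polynomial in $\sigma_1(z_1,\ldots,z_\ell),\ldots,\sigma_\ell(z_1,\ldots,z_\ell)$, which is exactly the required form since $\Lambda_1 + 1 = 1$, $\Lambda_2 = \ell$, and $\lambda_1 = \ell$.

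For the inductive step, set $e_j := \sigma_j(z_{\Lambda_1 + 1},\ldots,z_{\Lambda_2})$ for $1 \leqslant j \leqslant \lambda_1$ and view $p$ as an element of the polynomial ring $R[z_{\Lambda_1+1},\ldots,z_{\Lambda_2}]$, where $R := k[z_{\Lambda_2 + 1},\ldots,z_\ell]$. Since $p$ is symmetric in $z_{\Lambda_1 + 1},\ldots,z_{\Lambda_2}$ over the coefficient ring $R$, the fundamental theorem of symmetric polynomials applied over $R$ yields a (unique) representation
\[
p \;=\; \sum_{\alpha \in \mathbb{Z}_{\geqslant 0}^{\lambda_1}} c_\alpha(z_{\Lambda_2 + 1},\ldots,z_\ell)\, e_1^{\alpha_1}\cdots e_{\lambda_1}^{\alpha_{\lambda_1}}
\]
with each $c_\alpha \in R$. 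The key observation is that for every $2 \leqslant i \leqslant n$ and every permutation $\sigma \in S_{\lambda_i}$ acting on $z_{\Lambda_i + 1},\ldots,z_{\Lambda_{i+1}}$, the action of $\sigma$ fixes each $e_j$ (these variables are disjoint from the $i$-th block) and fixes $p$ by $\lambda$-symmetry. Since $e_1,\ldots,e_{\lambda_1}$ are algebraically independent over the quotient field of $R$, the representation above is unique, hence each coefficient $c_\alpha$ must itself be invariant under $\sigma$. Consequently every $c_\alpha$ is $\lambda'$-symmetric for $\lambda' := (\lambda_2,\ldots,\lambda_n)$.

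Applying the inductive hypothesis to each $c_\alpha$ in the variables $z_{\Lambda_2 + 1},\ldots,z_\ell$, we express $c_\alpha$ as a polynomial in $\{\sigma_j(z_{\Lambda_i + 1},\ldots,z_{\Lambda_{i+1}}) \mid 2 \leqslant i \leqslant n,\ 1 \leqslant j \leqslant \lambda_i\}$; substituting back produces the desired expression for $p$. The only step where something could go wrong is the inheritance of symmetry by the coefficients $c_\alpha$, and this is handled cleanly by the uniqueness (algebraic independence of the $e_j$ over $R$), so no genuine obstacle is expected.
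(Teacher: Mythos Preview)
Your proof is correct and follows essentially the same approach as the paper: both argue by induction on the number of blocks, peeling off one block via the fundamental theorem of symmetric polynomials and then applying the inductive hypothesis to the coefficients. The only cosmetic differences are that the paper peels off the last block rather than the first, and it observes the block-symmetry of the coefficients directly from the monomial expansion rather than via uniqueness of the elementary-symmetric representation.
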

    
    \begin{proof}
      We will prove the statement of the lemma by induction on $n$.
      The base case $n = 1$ is exactly the fundamental theorem on symmetric polynomials.
      Let $n > 1$ and consider a $\lambda$-symmetric polynomial $p \in k[z_1, \ldots, z_\ell]$.
      We write $p$ in the form $\sum\limits_{i = 1}^N a_i m_i$, where $a_i \in k[z_1, \ldots, z_{\Lambda_{n - 1}}]$
      and $m_i$ are distinct monomials in $z_{\Lambda_{n - 1} + 1}, \ldots, z_{\Lambda_n}$.
      Since $p$ is $\lambda$-symmetric, $a_i = a_j$ for every $1 \leqslant i, j \leqslant N$ such that $m_i$ and $m_j$
      can be obtained from each other by permuting the variables.
      Grouping together $m_i$'s with equal $a_i$'s, we obtain a representation $p = \sum\limits_{i = 1}^M b_i s_i$, where
      $b_i \in k[z_1, \ldots, z_{\Lambda_{n - 1}}]$ and $s_i$ is a symmetric polynomial in $z_{\Lambda_{n - 1} + 1}, \ldots, z_{\Lambda_n}$.
      The fundamental theorem on symmetric polynomials imply that each $s_i$ is contained in the subalgebra generated by 
      $\sigma_1(z_{\Lambda_{n - 1} + 1}, \ldots, z_{\Lambda_n})$, $\ldots$, $\sigma_{\lambda_n}(z_{\Lambda_{n - 1} + 1}, \ldots, z_{\Lambda_n})$.
      Since $p$ is $\lambda$-symmetric, $b_1, \ldots, b_M$ are $(\lambda_1, \ldots, \lambda_{n - 1})$-symmetric, so they belong to the subalgebra generated by
      \[
      \left\{ \sigma_j(z_{\Lambda_i + 1}, \ldots, z_{\Lambda_{i + 1}}) \mid 1 \leqslant i \leqslant n - 1, 1 \leqslant j \leqslant \lambda_i \right\}
      \]
      due to the induction hypothesis.
      Thus, the lemma is proved.
    \end{proof}
    
    %%%%%%%%%%%%%%%%%%%%%%%%%%%%%%
    
    \begin{definition}
        By $A_{\lambda}(k)$ we denote the subalgebra of $k[z_1, \ldots, z_\ell] / I_S$ consisting of images of all $\lambda$-symmetric polynomials.
    \end{definition}
    
    \begin{lem}\label{lem:dimension}
	    For all $\ell \in \mathbb{Z}_{>0}$ and $\lambda \in \mathbb{Z}_{\geqslant 0}^n$ with $\lambda_1 + \ldots + \lambda_n = \ell$,
		$$\dim_k A_{\lambda}(k) = \frac{\ell !}{\prod\limits_{i = 1}^n \lambda_i!}.$$
	\end{lem}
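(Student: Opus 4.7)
The plan is to realize $A_\lambda(k)$ as a quotient of a polynomial ring by a regular sequence, and then read the dimension off the Hilbert series. First I would combine Lemma~\ref{lem:fundamental} with algebraic independence of elementary symmetric polynomials in disjoint sets of variables to identify the ring of $\lambda$-symmetric polynomials in $k[z_1,\ldots,z_\ell]$ with a polynomial ring in the $\ell$ generators
\[
y_{i,j} := \sigma_j(z_{\Lambda_i+1},\ldots,z_{\Lambda_{i+1}}), \quad 1\le i\le n,\ \ 1\le j\le\lambda_i,
\]
graded by $\deg y_{i,j}=j$. By Lemma~\ref{lem:ideal}, $A_\lambda(k)$ is then the quotient of this polynomial ring by the ideal generated by $\sigma_1(z_1,\ldots,z_\ell),\ldots,\sigma_\ell(z_1,\ldots,z_\ell)$, and the generating-function identity
\[
\sum_{k=0}^{\ell}\sigma_k(z_1,\ldots,z_\ell)\,t^k \;=\; \prod_{i=1}^n\bigl(1 + y_{i,1}t + \cdots + y_{i,\lambda_i}t^{\lambda_i}\bigr)
\]
expresses each relation explicitly as a homogeneous polynomial of degree $k$ in the $y_{i,j}$.

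The key step will be to show that these $\ell$ relations form a regular sequence in the polynomial ring $k[y_{i,j}]$. The plan here is geometric: if $\sigma_1,\ldots,\sigma_\ell$ simultaneously vanish at a point $(y_{i,j})\in\mathbb{A}^\ell$ over $\overline{k}$, then the identity above forces $\prod_i\bigl(1+\sum_j y_{i,j}t^j\bigr) = 1$ in $\overline{k}[t]$. Since each factor has constant term $1$, comparing degrees in $t$ makes every factor equal to $1$, so every coordinate $y_{i,j}$ must vanish. Thus the $\ell$ homogeneous relations cut out only the origin, which is all that is needed, because $k[y_{i,j}]$ is Cohen--Macaulay and a homogeneous system of parameters in a graded Cohen--Macaulay ring is automatically a regular sequence.

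The final step is a Hilbert series computation via the Koszul complex. The regular sequence of degrees $1,2,\ldots,\ell$ yields
\[
H_{A_\lambda(k)}(t) \;=\; \prod_{k=1}^{\ell}(1-t^k)\cdot\prod_{i=1}^n\prod_{j=1}^{\lambda_i}\frac{1}{1-t^j},
\]
which is a polynomial in $t$ since the quotient is finite-dimensional. Numerator and denominator each contain exactly $\ell=\sum_i\lambda_i$ factors of $(1-t)$; cancelling them and setting $t=1$ gives
\[
\dim_k A_\lambda(k) \;=\; H_{A_\lambda(k)}(1) \;=\; \frac{\prod_{k=1}^\ell k}{\prod_{i=1}^n\prod_{j=1}^{\lambda_i}j} \;=\; \frac{\ell!}{\prod_{i=1}^n\lambda_i!}.
\]
The main obstacle is the regular-sequence argument of the second paragraph; everything else is essentially bookkeeping once the polynomial-ring identification of $R^\lambda$ is in place.
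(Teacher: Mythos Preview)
Your argument is correct and takes a genuinely different route from the paper's. The paper works directly inside $k[z_1,\ldots,z_\ell]/I_S$: using Lemma~\ref{lem:symmetric_reduction} it identifies $A_\lambda(k)$ with the space of $\lambda$-symmetric polynomials that are reduced with respect to the Gr\"obner basis of $I_S$, writes down an explicit monomial-symmetrization basis indexed by tuples $(d_1,\ldots,d_\ell)$ with $d_{\Lambda_i+1}\le\Lambda_i$ and $d_{\Lambda_i+1}\ge\cdots\ge d_{\Lambda_{i+1}}$, and then counts these tuples as a product of multiset binomials $\prod_i\binom{\Lambda_{i+1}}{\lambda_i}$. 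In particular the paper uses Lemma~\ref{lem:symmetric_reduction} and never touches Lemmas~\ref{lem:ideal} or~\ref{lem:fundamental} here.

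Your approach instead combines Lemmas~\ref{lem:ideal} and~\ref{lem:fundamental} to present $A_\lambda(k)$ as a graded complete intersection $k[y_{i,j}]/(\sigma_1,\ldots,\sigma_\ell)$ and reads the dimension off the Hilbert series. This is more structural and less elementary (it invokes Cohen--Macaulayness and the Koszul complex), but it has two pleasant side effects: it gives the full graded Hilbert series, not just the total dimension, and it essentially pre-proves Lemma~\ref{lem:AandC}, since your quotient $k[y_{i,j}]/(\sigma_1,\ldots,\sigma_\ell)$ is exactly the algebra $C_\lambda(k)$ introduced there. The paper's approach, on the other hand, produces an explicit $k$-basis of $A_\lambda(k)$, which your method does not.
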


	\begin{proof}
		Let us identify every element of $A_{\lambda}(k)$ with its normal form with respect to the Gr\"obner basis $\{g_1, \ldots, g_\ell\}$ from Lemma~\ref{lem:grobner_basis}.
        Lemma~\ref{lem:symmetric_reduction} implies that $A_{\lambda}(k)$ consists of polynomials $p$ such that $p$ is reduced with respect to $\{ g_1, \ldots, g_\ell \}$, and $p$ is $\lambda$-symmetric.
        By $G$ we denote the subgroup of $S_\ell$ such that for every $\sigma \in G$ and $\Lambda_i + 1 \leqslant j \leqslant \Lambda_{i + 1}$, $\sigma(j)$ satisfies $\Lambda_i + 1 \leqslant \sigma(j) \leqslant \Lambda_{i + 1}$.
        Then $G \cong S_{\lambda_1} \times S_{\lambda_2} \times \ldots \times S_{\lambda_n}$.
        
        By $B$ we denote the set of vectors $\mathbf{d} = (d_1, \ldots, d_\ell) \in \mathbb{Z}_{\geqslant 0}^\ell$ such that $d_{\Lambda_i + 1} \leqslant \Lambda_i$ and $d_{\Lambda_i + 1} \geqslant d_{\Lambda_i + 2} \geqslant \ldots \geqslant d_{\Lambda_{i + 1}}$ for every $i \leqslant n$.
        For every such vector, we associate the polynomial 
        \[
        f_{\mathbf{d}} = \sum\limits_{\sigma \in G} z_{\sigma(1)}^{d_1} z_{\sigma(2)}^{d_2} \cdots z_{\sigma(\ell)}^{d_\ell}.
        \]
        We claim that these polynomials constitute a $k$-basis of $A_{ \lambda }(k)$.
        The leading monomial of $f_{\mathbf{d}}$ is $z_1^{d_1}\cdots z_\ell^{d_\ell}$, so polynomials $f_{\mathbf{d}}$ have distinct leading monomials.
        Hence, they are linearly independent.
        It is sufficient to prove that for every $p \in A_{\lambda}(k)$, the leading monomial of $p$ coincides with the leading monomial of $f_{\mathbf{d}}$ for some $\mathbf{d}$.
        If $M = z_1^{e_1}\cdots z_\ell^{e_\ell}$ is the leading monomial of $p$, then $e_{\Lambda_i + 1} \leqslant \Lambda_i$ for all $i$ since $p$ is reduced with respect to the Gr\"obner basis.
        Assume also that for some $k$ and $i$ such that $\Lambda_i + 1 \leqslant k < \Lambda_{i + 1}$, we have $e_k < e_{k + 1}$.
        Due to the symmetry, the monomial $M\frac{z_k^{e_{k + 1}} z_{k + 1}^{e_k}}{z_{k}^{e_k} z_{k + 1}^{e_{k + 1}}}$, which is larger than $M$, occurs in $p$.
        Hence, the exponent of $M$ lies in $B$.
        
        Thus, we need to compute the cardinality of $B$.
        For every fixed $i$, the number of tuples $(d_{\Lambda_i + 1}, \ldots, d_{\Lambda_{i + 1}})$ such that $\Lambda_i \geqslant d_{\Lambda_i + 1} \geqslant \ldots \geqslant d_{\Lambda_{i + 1}}$ is equal to the number of multisets of size $\Lambda_{i + 1} - \Lambda_i = \lambda_i$ whose elements are chosen from $\{ 0, 1, \ldots, \Lambda_i \}$.
        The number of such multisets is $\binom{\Lambda_i + \lambda_i}{\lambda_i} = \binom{\Lambda_{i + 1}}{\lambda_i}$.
        Hence, the total number of elements in $B$ is
        \[
        \prod\limits_{i = 1}^n \binom{\Lambda_{i + 1}}{\lambda_i} = \frac{\ell !}{\prod\limits_{i = 1}^n \lambda_i! }.\qedhere
        \]
	\end{proof}

  %%%%%%%%%%%
  \begin{definition}
  For every $i \leqslant n$, by $A_{\lambda}^i(k)$ we denote the subalgebra of  $A_{\mathbb{\lambda}}(k)$ consisting of images of symmetric polynomials from $k[z_{\Lambda_i + 1}, \ldots, z_{\Lambda_{i + 1}}]$.
	    
  For every nonzero $a \in A_{\lambda}(k)$, by $\nu(a)$ we denote the minimal total degree of a monomial in the reduction of $a$ with respect to the Gr\"obner basis of $I_S$.
  Note that since $I_S$ is homogeneous, $\nu(a)$ does not depend on the ordering of variables.
  We also observe that if $ab \neq 0$, then $\nu(ab) = \nu(a) + \nu(b)$.
  \end{definition}

  The following Propositions~\ref{prop:nilpotent_zero} and~\ref{prop:nilpotent_nonzero} are important for applications of our results to differential algebra (see the proof of Theorem~\ref{th:degree}).

	\begin{prop}\label{prop:nilpotent_zero}
		Let $a \in A_{\lambda}^i(k)$ be an element such that $\nu(a) = r > 0$.
        Then $a^{d} = 0$ for all $d > \frac{\lambda_i}{r}(\ell - \lambda_i)$.
	\end{prop}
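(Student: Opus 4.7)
My plan is to establish a top-degree bound on $A_\lambda^i(k)$: every nonzero homogeneous component sits in total degree at most $\lambda_i(\ell - \lambda_i)$. Granted this, the proposition follows at once from the multiplicativity of $\nu$ already noted in the paper. Indeed, either $a^d = 0$ directly, or $\nu(a^d) = d\,\nu(a) = dr$; in the latter case the reduced form of $a^d$ would be supported entirely in degrees $\geq dr > \lambda_i(\ell - \lambda_i)$, contradicting the top-degree bound.

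The crucial input is a relation in $A_\lambda(k)$ coming from a generating-function identity. Since $\sigma_j(z_1, \ldots, z_\ell) \in I_S$ for every $j \geq 1$, in $A_\lambda(k)[[t]]$ we have $\prod_{j=1}^{\ell}(1 + z_j t) = 1$. Setting $y = (z_{\Lambda_i + 1}, \ldots, z_{\Lambda_{i+1}})$ and letting $\omega$ denote the remaining $\ell - \lambda_i$ variables, this factors as $c(y)\, c(\omega) = 1$ with $c(y)$ of degree $\lambda_i$ and $c(\omega)$ of degree $\ell - \lambda_i$ in $t$. Hence
$$
\sum_{k \geq 0}(-1)^k h_k(y)\, t^k \;=\; c(y)^{-1} \;=\; c(\omega)
$$
is a polynomial in $t$ of degree $\ell - \lambda_i$, so $h_k(y) = 0$ in $A_\lambda(k)$ for every $k > \ell - \lambda_i$. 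By Lemma~\ref{lem:fundamental} applied with $n = 1$, the subalgebra $A_\lambda^i(k)$ is generated over $k$ by $\sigma_1(y), \ldots, \sigma_{\lambda_i}(y)$; expressing each $h_k(y)$ as a polynomial $H_k$ of degree $k$ in these generators via Newton's identities, we realize $A_\lambda^i(k)$ as a graded quotient of
$$
B \;:=\; k[u_1, \ldots, u_{\lambda_i}]\big/\bigl(H_{\ell - \lambda_i + 1}, \ldots, H_{\ell}\bigr), \qquad \deg u_j = j.
$$

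It then remains to show that $B$ is a graded Artinian complete intersection whose socle degree equals
$$
\sum_{j=1}^{\lambda_i}(\ell - \lambda_i + j) - \sum_{j=1}^{\lambda_i} j = \lambda_i(\ell - \lambda_i),
$$
after which $A_\lambda^i(k)$, being a quotient, inherits $\lambda_i(\ell - \lambda_i)$ as an upper bound on its top degree. The hard part is verifying that $(H_{\ell - \lambda_i + 1}, \ldots, H_{\ell})$ is actually a regular sequence in $k[u_1, \ldots, u_{\lambda_i}]$; equivalently, that its common zero locus in $\overline{k}^{\lambda_i}$ is just the origin. Under the Vieta substitution $u_j = \sigma_j(y)$ this reduces to showing that the simultaneous vanishing of $\lambda_i$ consecutive $h_k(y)$'s forces $y_1 = \ldots = y_{\lambda_i} = 0$, which I would deduce from the partial-fraction expansion $\sum_k h_k(y)\, t^k = \prod_j (1 - y_j t)^{-1}$ together with a Vandermonde nonvanishing argument. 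Once $B$'s complete intersection structure is in place, the socle-degree formula is standard (the degrees of the relations minus the degrees of the variables), and the opening paragraph finishes the proof. This construction, incidentally, is exactly the Borel presentation of the cohomology ring of $\mathrm{Gr}(\lambda_i, \ell)$, which makes the Schubert-calculus connection mentioned in the introduction visible already at this level.
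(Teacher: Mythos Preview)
Your argument is correct and complete modulo the standard facts you invoke (the linear-recurrence/Vandermonde step and the socle-degree formula for a graded complete intersection are both routine). One small remark: you only need the inequality $\nu(a^d)\geq d\,\nu(a)$, not the equality stated in the paper, and that inequality is immediate from homogeneity of $I_S$.

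Your route, however, is genuinely different from the paper's. The paper proves the same top-degree bound on $A_\lambda^i(k)$ by a direct Gr\"obner-basis argument: it reduces $p^d$ modulo the basis of Lemma~\ref{lem:grobner_basis}, uses Lemma~\ref{lem:symmetric_reduction} to see that the reduced form is symmetric both in $z_1,\ldots,z_{\lambda_i}$ and in the remaining variables, and then reads off from the leading terms of the basis that the reduced form cannot involve the first block at all and has degree $\leq\lambda_i$ in each remaining variable, giving total degree $\leq\lambda_i(\ell-\lambda_i)$. You instead derive the relations $h_k(y)=0$ for $k>\ell-\lambda_i$ from the factorization $c(y)c(\omega)=1$, present $A_\lambda^i(k)$ as a quotient of the Artinian complete intersection $k[u_1,\ldots,u_{\lambda_i}]/(H_{\ell-\lambda_i+1},\ldots,H_\ell)$, and read off the socle degree. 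The paper's approach is entirely elementary and self-contained within the Gr\"obner machinery already set up; yours imports a little commutative algebra but has the pleasant feature of identifying $A_\lambda^i(k)$ with (a quotient of) the Borel presentation of $H^*(\mathrm{Gr}(\lambda_i,\ell))$, which makes the Schubert-calculus connection announced in the introduction appear already here rather than only in Proposition~\ref{prop:nilpotent_nonzero}.
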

    
    \begin{proof}
		Reordering variables if necessary, we can assume that $i = 1$.
        Then $a$ is an image of $p(z_1, \ldots, z_{\lambda_1})$, where $p$ is a symmetric polynomial in $z_1, \ldots, z_{\lambda_1}$ over $k$ without constant term.
        Let $g_1, \ldots, g_\ell$ be the Gr\"obner basis of $I_S$ from Lemma~\ref{lem:grobner_basis}.
        Lemma~\ref{lem:symmetric_reduction} implies, that for every $d$, the reduction of $p(z_1, \ldots, z_{\lambda_1})^d$ with respect to this basis is a polynomial symmetric with respect to $z_1, \ldots, z_{\lambda_1}$ and with respect to all other variables.
        Let us denote this reduction by $q$.
        Since $\deg_{z_1} q < \deg_{z_1} g_1 = 1$, $q$ does not involve $z_1$.
        Symmetry implies that $q$ does not involve also $z_2, \ldots, z_{\lambda_1}$.
        Analogously, for $i > \lambda_1$ we have $\deg_{z_i} q < \deg_{z_{\lambda_1 + 1}} g_{\lambda_1 + 1} = \lambda_1 + 1$.
        Hence, the total degree of $q$ does not exceed $\lambda_1 (\ell - \lambda_1)$.
        On the other hand, $I_S$ is a homogeneous ideal, so the total degree of $q$ can not be less than $\nu(p^d) = dr$.
        Thus, for all $d > \frac{\lambda_1}{r}(\ell - \lambda_1)$, $q$ is zero.
	\end{proof}

	\begin{prop}\label{prop:nilpotent_nonzero}
		Let $a = x_{\Lambda_i + 1} + \ldots + x_{\Lambda_{i + 1}} \in A_{\lambda}^i(k)$.
        Then $a^{\lambda_i(\ell - \lambda_i)} \neq 0$.
	\end{prop}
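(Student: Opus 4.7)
My approach is to reduce to a two-block partition, identify a one-dimensional top-degree component of $A_\lambda(k)$, and then use the connection to Schubert calculus to pin down $a^{\lambda_i(\ell-\lambda_i)}$ as a nonzero multiple of the top element.

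First, by reordering variables I may assume $i = 1$, so $a = z_1 + \ldots + z_{\lambda_1}$. I pass to the coarser partition $\lambda' = (\lambda_1, \ell - \lambda_1)$: every $\lambda'$-symmetric polynomial is $\lambda$-symmetric, so there is a natural inclusion $A_{\lambda'}(k) \hookrightarrow A_{\lambda}(k)$, and $a$ lies in $A_{\lambda'}(k)$. Writing $m = \lambda_1$, $N = \ell - \lambda_1$, and $w_j := z_{m+j}$, it is enough to show $a^{mN} \neq 0$ in $R := A_{(m,N)}(k)$. I next pin down the top-degree component of $R$: for $\lambda = (m,N)$ the basis exhibited in the proof of Lemma~\ref{lem:dimension} consists of exponent tuples with $d_1 = \ldots = d_m = 0$ and $m \geq d_{m+1} \geq \ldots \geq d_\ell \geq 0$, so the total degree is at most $mN$, with equality only at $\mathbf{d} = (0,\ldots,0,m,\ldots,m)$. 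Hence the degree-$mN$ component of $R$ is one-dimensional, spanned by the class of $w_1^m \cdots w_N^m$.

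Since $e_1(z_1,\ldots,z_\ell) \in I_S$, I have $a \equiv -(w_1 + \ldots + w_N) \pmod{I_S}$, so $a^{mN}$ reduces modulo the Gr\"obner basis to some scalar multiple $\gamma \cdot w_1^m \cdots w_N^m$, and the task is to show $\gamma \neq 0$. This is where the Schubert calculus connection enters: the relations $e_k(y, w) = 0$ for $k = 1, \ldots, \ell$ (which live inside $I_S$) coincide with the classical defining relations of $H^*(Gr(m,\ell); k)$ in the presentation via Chern roots of the tautological subbundle and the quotient bundle. Combined with the matching dimensions $\dim_k R = \binom{\ell}{m} = \dim_k H^*(Gr(m,\ell); k)$ from Lemma~\ref{lem:dimension}, this gives an isomorphism $R \cong H^*(Gr(m,\ell); k)$ of graded algebras, identifying $a$ with (a sign times) the hyperplane Schubert class $\sigma_1$. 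By Pieri's rule $\sigma_1^{mN} = f^{(N^m)} \sigma_{(N^m)}$, where $f^{(N^m)}$ is the positive number of standard Young tableaux of the $m \times N$ rectangle and $\sigma_{(N^m)}$ is the top-degree Schubert class, so $\gamma \neq 0$.

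The main obstacle is the last step, i.e., confirming $\gamma \neq 0$. The Schubert-calculus identification just sketched is clean but relies on external facts; a fully self-contained alternative is to compute the reduction of $(w_1 + \ldots + w_N)^{mN}$ modulo the relevant Gr\"obner basis pieces $g_{m+k} = h_{m+k}(w_k,\ldots,w_N)$ explicitly, using Newton-style recursions and verifying by induction (on $N$, say) that the coefficient of $w_1^m \cdots w_N^m$ remains nonzero throughout the reduction.
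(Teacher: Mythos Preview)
Your argument is correct and follows a genuinely different Schubert-calculus route than the paper's. The paper stays inside the full coinvariant algebra $k[z_1,\ldots,z_\ell]/I_S$, identifies $a$ with the Schubert polynomial $\mathfrak{S}_{s_{\lambda_1}}$, and applies Monk's formula repeatedly: since all coefficients in Monk's formula are nonnegative, it suffices to exhibit a single permutation $\omega$ of length $\lambda_1(\ell-\lambda_1)$ reachable from the identity by $\lambda_1(\ell-\lambda_1)$ transpositions $(jk)$ with $j\le\lambda_1<k$, and the paper writes one down explicitly. You instead coarsen to the two-block partition $\lambda'=(\lambda_1,\ell-\lambda_1)$, recognize $A_{\lambda'}(k)$ as the Borel presentation of $H^*(\mathrm{Gr}(\lambda_1,\ell);k)$ (the $S_{\lambda_1}\times S_{\ell-\lambda_1}$-invariants in the coinvariant algebra), and invoke Pieri's rule to get $a^{\lambda_1(\ell-\lambda_1)}=\pm f^{(N^m)}\,[\mathrm{pt}]$ with $f^{(N^m)}>0$.

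Your approach has the advantage of producing the coefficient explicitly as a standard Young tableau count (and hence, via the hook length formula, a closed form), whereas the paper only shows nonvanishing and in its subsequent remark identifies the coefficient as a transposition-factorization count. The paper's approach, on the other hand, avoids the passage to the Grassmannian and the dimension-matching step (your appeal to Lemma~\ref{lem:dimension}), working entirely with the combinatorics of Monk's formula. Both arguments ultimately rest on standard external facts from Schubert calculus; your closing remark about a self-contained Gr\"obner reduction is an interesting direction but, as you note, would require real work to carry out.
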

    
    \begin{proof}
		Our proof will use some facts about Shubert polynomials, in particular, Monk's formula.
        For details and profound account, we refer reader to the original paper \cite{Monk} and the monograph \cite{Manivel}.
        We will briefly recall the most important facts.
        Shubert polynomials are polynomials in $z_1, \ldots, z_\ell$ indexed by permutations.
        By $\mathfrak{S}_{\omega}$ we will denote the Shubert polynomial corresponding to a permutation $\omega \in S_\ell$.
        We will use several properties of these polynomials:
        \begin{enumerate}
            \item The set of all Shubert polynomials in $z_1, \ldots, z_\ell$ constitute a $k$-basis of $k[z_1, \ldots, z_\ell] / I_S$ (see~\cite[Prop. 2.5.3]{Manivel}).
            \item By $s_i$ we denote the transposition $(i, i + 1) \in S_\ell$. Then $\mathfrak{S}_{s_i} = z_1 + \ldots + z_i$ (see~\cite[\S 2.7.1]{Manivel}).
            \item (Monk's formula) For every permutation $\omega \in S_\ell$:
            $$
            \mathfrak{S}_{s_r} \cdot \mathfrak{S}_{\omega} \equiv \sum\limits_{\substack{ j \leqslant r < k \\ l(\omega (j k)) = l(\omega) + 1 } } \mathfrak{S}_{\omega (jk)} \pmod{I_S},
            $$
            where $l(\omega)$ is the number of inversions in $\omega$ (see~\cite[Th. 2.7.1]{Manivel} and \cite[Th. 3]{Monk}).
		\end{enumerate}
        
        Let us return to the proof of the proposition.
        Reordering variables if necessary, we may assume that $i = 1$.
        Using the language of Shubert polynomials, we want to prove that $\mathfrak{S}_{s_{\lambda_1}}^{\lambda_1(\ell - \lambda_1)} = \mathfrak{S}_{s_{\lambda_1}}^{\lambda_1(\ell - \lambda_1)}\mathfrak{S}_{e} \neq 0 \pmod{I_S}$, where $e$ is the identity in $S_\ell$.
        The expression $\mathfrak{S}_{s_{\lambda_1}}^{\lambda_1(\ell - \lambda_1)}$ can be represented as a sum of Shubert polynomials with non negative integer coefficients 
        by applying Monk's formula $\lambda_1(\ell - \lambda_1) - 1$ times.
        Due to Monk's formula, every summand in this representation corresponds to some permutation $\omega$ such that $l(\omega) = \lambda_1(\ell - \lambda_1)$ 
        and $\omega$ can be written as a product of $\lambda_1(\ell - \lambda_1)$ transpositions of the form $(j k)$, where $j \leqslant \lambda_1 < k$.
        Showing that there exists at least one such permutation would imply that there is at least one such summand, so $\mathfrak{S}_{s_{\lambda_1}}^{\lambda_1(\ell - \lambda_1)} \neq 0 \pmod{I_S}$.
        We claim that the following permutation satisfies both properties
        $$
        \omega = 
        \begin{pmatrix}
			1                    & 2                    & \ldots & \lambda_1    & \lambda_1 + 1 & \lambda_1 + 2 & \ldots & \ell            \\
            \ell - \lambda_1 + 1 & \ell - \lambda_1 + 2 & \ldots & \ell         & 1             & 2             & \ldots & \ell - \lambda_1
		\end{pmatrix}.
        $$
        Direct computation shows that the number of inversions is exactly $\lambda_1(\ell - \lambda_1)$.
        The second condition is satisfied, because $\omega$ admits the following representation
        \[
        \omega = \prod\limits_{i = 1}^{\lambda_1} (i \ell) (i \ \ell - 1) \cdots (i \ \lambda_1 + 1).\qedhere
        \]
	\end{proof}
    
    \begin{remark}
		It can be shown, that $\omega$ in the proof above is the only permutation such that $l(\omega) = \lambda_1(\ell - \lambda_1)$ 
        and $\omega$ can be written as a product of $\lambda_1(\ell - \lambda_1)$ transpositions of the form $(j k)$, where $j \leqslant \lambda_1 < k$.
        Then we will obtain the congruence $\mathfrak{S}_{s_{\lambda_1}} \equiv N \mathfrak{S}_{\omega} \pmod{I_S}$, where $N$ is the number of representations of $\omega$ as a product of $\lambda_1(\ell - \lambda_1)$ transpositions of the form $(j k)$, where $j \leqslant \lambda_1 < k$.
        
        For the special case $\lambda = (2, 1, \ldots, 1)$, there is a one-to-one correspondence between the set of such representations and the set of expressions containing $\ell - 2$ pair of parenthesis, which are correctly matched,
        where transpositions of the form $(1 i)$ correspond to open parenthesis, and transpositions of the form $(2 i)$ correspond to close parenthesis (it can be proved using the combinatorial discussion after Th. 3 in \cite{Monk}).
        Thus, in this case $N = C_{\ell - 2}$, the $\ell - 2$-th Catalan number.
        Moreover, since $\omega$ is a Grassmanian permutation, \cite[Prop. 2.6.8]{Manivel} implies that $\mathfrak{S}_{\omega} = z_3^2\cdots z_\ell^2$.
        Hence, we obtain the following curious congruence
        $$
        (z_1 + z_2)^{2(\ell - 2)} \equiv C_{\ell - 2} z_3^2 \cdots z_\ell^2 \pmod{I_S}.
        $$
	\end{remark}

%%%%%%%%%%%%%%%%%%%%%%%%%%%%%
\section{Structure of the jet schemes of $x_1\cdots x_n$}\label{sec:jets}

	The following theorem describes the structure of the $m$-th jet ideal of $(x_1\cdots x_n)$ in terms of algebras $A_{\lambda}(k)$ introduced in Definition~\ref{def:main}.
	For an arbitrary $\lambda = (\lambda_1, \ldots, \lambda_n) \in \mathbb{Z}_{\geqslant 0}^n$ such that $\lambda_1 + \ldots + \lambda_n = m + 1$, by $\mathfrak{p}_{\lambda}$ we denote the prime ideal in $R = k\left[ x_i^{(j)} \mid 1 \leqslant i \leqslant n, 0 \leqslant j \leqslant m \right]$ generated by polynomials $x_{i}^{(j)}$ with $1 \leqslant i \leqslant n$ and $0 \leqslant j < \lambda_i$.
    In \cite[Th. 2.2]{GowardSmith} it is proved that minimal primes of $J_m(x_1\cdots x_n)$ are exactly all ideals of the form $\mathfrak{p}_{\lambda}$.    

	\begin{thrm}\label{th:jet_scheme}
    	Let $n, m$ be positive integers. 
        Consider $R$ and $\mathfrak{p}_{\lambda}$ defined above.
        If we denote the subfield $k\left( x_i^{(j)} \mid 1 \leqslant i \leqslant n, \lambda_i \leqslant j \leqslant m \right)$ of the field of fractions of $R$ by $K_{\lambda}$, then
        $$
        \left( R / J_m\left( x_1 \cdots x_n\right) \right)_{\mathfrak{p}_{\lambda}} \cong A_{\lambda} \left( K_{\lambda} \right),
        $$
        where the isomorphism is an isomorphism of $K_\lambda$-algebras.
	\end{thrm}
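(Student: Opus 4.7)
The plan is to realize $B := (R/J_m(x_1\cdots x_n))_{\mathfrak{p}_\lambda}$ explicitly as a quotient of a polynomial ring over $K_\lambda$ via a Weierstrass-type factorization, and then match the resulting presentation against the intrinsic description of $A_\lambda(K_\lambda)$ furnished by Lemmas~\ref{lem:ideal} and~\ref{lem:fundamental}.

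By \cite[Th.~2.2]{GowardSmith}, $\mathfrak{p}_\lambda$ is a minimal prime of $J_m(x_1\cdots x_n)$, so $B$ is Artinian local with residue field $K_\lambda$ and nilpotent maximal ideal $\mathfrak{m}$, in particular Henselian. Setting $P_i(t) := \sum_{j=0}^m x_i^{(j)} t^j \in B[t]$, the reduction $P_i \bmod \mathfrak{m}$ equals $x_i^{(\lambda_i)} t^{\lambda_i}$ times a unit in $K_\lambda[t]/(t^{m+1})$; Hensel's lemma lifts this coprime factorization to
\[
P_i(t) \equiv U_i(t)\, W_i(t) \pmod{t^{m+1}}, \qquad W_i(t) = t^{\lambda_i} + c_{i,\lambda_i-1}t^{\lambda_i-1}+\cdots+c_{i,0},
\]
with $U_i$ a unit and $c_{i,j} \in \mathfrak{m}$. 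Matching coefficients of $t^j$ for $j < \lambda_i$ yields a lower-triangular system expressing $(x_i^{(j)})_{j<\lambda_i}$ in terms of $(c_{i,j})_{j<\lambda_i}$ with unit diagonal $1/x_i^{(\lambda_i)}$, so the $c_{i,j}$ generate $B$ over $K_\lambda$. Taking the product of the factorizations and using that $\prod_i U_i$ is a unit, the identity $0 \equiv \prod_i P_i(t) \pmod{t^{m+1}}$ becomes $\prod_i W_i(t) = t^{m+1}$ as polynomials in $B[t]$ (both sides are monic of degree $\ell = m+1$). Thus $B$ is the quotient of $K_\lambda[c_{i,j} : 0 \leq j < \lambda_i]$ by the coefficients of $\prod_i W_i(t) - t^{m+1}$ at $t^0, \ldots, t^m$.

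Now define $\phi : K_\lambda[c_{i,j}] \to A_\lambda(K_\lambda)$ by $c_{i,j} \mapsto (-1)^{\lambda_i - j} \sigma_{\lambda_i - j}(z_{\Lambda_i + 1}, \ldots, z_{\Lambda_{i+1}})$, which is surjective by Lemma~\ref{lem:fundamental}. Under $\phi$, $W_i(t)$ maps to $\pi_i(t) := \prod_{j = \Lambda_i + 1}^{\Lambda_{i+1}} (t - z_j)$, so $\prod_i W_i(t) - t^{m+1}$ maps to $\prod_{j=1}^{\ell}(t - z_j) - t^\ell$, whose coefficients at $t^0, \ldots, t^m$ are (up to sign) the $\sigma_k(\mathbf{z})$ for $k = 1, \ldots, \ell$ and therefore lie in $I_S$. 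Hence $\phi$ descends to a surjection $\bar\phi : B \twoheadrightarrow A_\lambda(K_\lambda)$. Injectivity follows from Lemma~\ref{lem:ideal}: every $\lambda$-symmetric element of $I_S$ lies in the ideal generated by $\sigma_1(\mathbf{z}), \ldots, \sigma_\ell(\mathbf{z})$ inside the $\lambda$-symmetric subring (the domain of $\phi$), and these generators are precisely the defining relations of $B$ from the previous paragraph; thus $\ker \phi$ equals the kernel of $K_\lambda[c_{i,j}] \twoheadrightarrow B$, forcing $\bar\phi$ to be an isomorphism.

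The principal obstacle is the Weierstrass factorization: although $R_{\mathfrak{p}_\lambda}$ itself is not Henselian, the argument proceeds directly in the Artinian quotient $B$, in which the nilpotency of $\mathfrak{m}$ makes Hensel's lemma (equivalently, a finite coefficient-by-coefficient induction that terminates because $\mathfrak{m}^N = 0$ for large $N$) immediately applicable. The remainder of the proof is a careful dictionary between the two presentations of $B$ and $A_\lambda(K_\lambda)$ via the $c_{i,j} \leftrightarrow (-1)^{\lambda_i - j}\sigma_{\lambda_i-j}(z_{\Lambda_i+1},\ldots,z_{\Lambda_{i+1}})$ correspondence.
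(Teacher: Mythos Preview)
Your argument has a genuine gap at the sentence ``Thus $B$ is the quotient of $K_\lambda[c_{i,j} : 0 \leq j < \lambda_i]$ by the coefficients of $\prod_i W_i(t) - t^{m+1}$.'' What you have actually shown is only that the $c_{i,j}$ generate $B$ over $K_\lambda$ and that the relations $R$ coming from $\prod_i W_i = t^{m+1}$ hold in $B$; this yields a surjection $C_\lambda(K_\lambda) := K_\lambda[c_{i,j}]/R \twoheadrightarrow B$, not an equality. Because you performed the Hensel/Weierstrass factorization \emph{inside} the quotient $B$, you cannot read off a presentation of $B$ from it: the elements $c_{i,j}$ live only in $B$, and any further relation among the $x_i^{(j)}$ forced by $J_m$ could in principle impose extra relations among the $c_{i,j}$ beyond $R$.

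This gap propagates: with only $R \subset \ker(K_\lambda[c_{i,j}]\to B)$ and $\ker\phi = R$ (the latter is what Lemma~\ref{lem:ideal} and Lemma~\ref{lem:fundamental} give, and is correct), the inclusion goes the wrong way for $\phi$ to descend to $B$. You obtain a surjection $A_\lambda(K_\lambda)\cong C_\lambda(K_\lambda)\twoheadrightarrow B$, not $\bar\phi\colon B\twoheadrightarrow A_\lambda(K_\lambda)$, and your injectivity paragraph then begs the question: calling $R$ ``the defining relations of $B$'' is exactly the unproven claim. The paper closes this gap not locally but by a global dimension count: summing the inequalities $\dim_{K_\lambda} B_\lambda \leqslant \dim_{K_\lambda} A_\lambda(K_\lambda)$ over all $\lambda$ with $|\lambda|=m+1$, the left side equals $n^{m+1}$ by B\'ezout (the $J_m$ is generated by $m+1$ forms of degree $n$ in codimension $m+1$, and each $\mathfrak{p}_\lambda$ is linear) while the right side equals $n^{m+1}$ by Lemma~\ref{lem:dimension} and the multinomial theorem, forcing every surjection to be an isomorphism. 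Your local argument can be salvaged by carrying out the Weierstrass factorization not in $B$ but in the completion $\widehat{R_{\mathfrak{p}_\lambda}}$ (which is Henselian), where your triangular system becomes an honest change of formal coordinates; then $J_m$ in the new coordinates is visibly the extension of $R$, and passing to the Artinian quotient gives $B\cong C_\lambda(K_\lambda)$ directly.
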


	Lemma~\ref{lem:dimension} and Theorem~\ref{th:jet_scheme} imply the following corollary, which gives an alternative proof of the main result of \cite{Yeun}.

	\begin{cor}
		For every $\lambda = (\lambda_1, \ldots, \lambda_n)$ such that $\lambda_1 + \ldots + \lambda_n = m + 1$, the multiplicity of the ideal $J_m(x_1\cdots x_n)$ along $\mathfrak{p}_{\lambda}$ is equal to $\frac{(m + 1)!}{\lambda_1! \cdots \lambda_n!}$.
	\end{cor}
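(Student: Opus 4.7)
The plan is to recognize that this corollary follows by combining Theorem~\ref{th:jet_scheme} with Lemma~\ref{lem:dimension}, once we unfold the definition of multiplicity along a minimal prime in the right way.

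First, I would recall the definition: the multiplicity of the ideal $J_m(x_1\cdots x_n)$ along the minimal prime $\mathfrak{p}_\lambda$ is the length of the localization $\left( R/J_m(x_1\cdots x_n)\right)_{\mathfrak{p}_\lambda}$ as a module over $R_{\mathfrak{p}_\lambda}$. Because $\mathfrak{p}_\lambda$ is a minimal prime of $J_m(x_1\cdots x_n)$ (by \cite[Th. 2.2]{GowardSmith}), this localization is a local Artinian ring, and its residue field is exactly $R_{\mathfrak{p}_\lambda}/\mathfrak{p}_\lambda R_{\mathfrak{p}_\lambda} = K_\lambda$, since $\mathfrak{p}_\lambda$ is generated by the variables $x_i^{(j)}$ with $j < \lambda_i$ while $K_\lambda$ is obtained by adjoining the complementary set of variables. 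For such a ring, which is moreover finite-dimensional over its own residue field, the length as a module coincides with the dimension as a $K_\lambda$-vector space.

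Next, Theorem~\ref{th:jet_scheme} provides an isomorphism of $K_\lambda$-algebras $\left( R/J_m(x_1\cdots x_n)\right)_{\mathfrak{p}_\lambda} \cong A_\lambda(K_\lambda)$, so the multiplicity equals $\dim_{K_\lambda} A_\lambda(K_\lambda)$. Finally, applying Lemma~\ref{lem:dimension} with $\ell = \lambda_1 + \ldots + \lambda_n = m+1$ gives $\dim_{K_\lambda} A_\lambda(K_\lambda) = \frac{(m+1)!}{\lambda_1!\cdots\lambda_n!}$, which is the claimed multiplicity. There is no real obstacle in this argument; the only point requiring a moment's care is the identification of the length of the Artinian localization with the $K_\lambda$-vector-space dimension of $A_\lambda(K_\lambda)$, which is immediate from the fact that the isomorphism in Theorem~\ref{th:jet_scheme} respects the $K_\lambda$-algebra structure.
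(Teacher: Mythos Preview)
Your proposal is correct and follows exactly the route the paper indicates: the corollary is stated as an immediate consequence of Theorem~\ref{th:jet_scheme} together with Lemma~\ref{lem:dimension}, and you have simply spelled out the identification of the multiplicity with $\dim_{K_\lambda} A_\lambda(K_\lambda)$ that the paper leaves implicit.
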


	\begin{proof}[Proof of Theorem~\ref{th:jet_scheme}]
    	For every $i \leqslant n$, we set $\Lambda_i = \lambda_1 + \ldots + \lambda_{i - 1}$ (and $\Lambda_0 = 0$).
        We will denote $J_m(x_1\cdots x_n)$ by $J_m$.
    
    	Assume that $\lambda_i = m + 1$ for some $i$, say for $i = 1$.
        Then $\lambda = (m + 1, 0, \ldots, 0)$.
        The ideal generated by $J_m$ in the localization $R_{\mathfrak{p}_{\lambda}}$ is $(x_1^{(0)}, \ldots, x_1^{(m)})$, so $\left( R / J_m\right)_{\mathfrak{p}_{\lambda}} \cong K_{\lambda}$.
        Since $A_{\lambda}\left( K_{\lambda} \right) \cong K_{\lambda}$, in what follows we can assume that $\lambda_i \leqslant m$ for all $i$.
        
        By $Q_{\lambda}$ we denote the primary component of $J_m$ corresponding to prime ideal $\mathfrak{p}_\lambda$.
        Let $B = \left( R / J_m\right)_{\mathfrak{p}_{\lambda}} = \left( R / Q_{\lambda}\right)_{\mathfrak{p}_{\lambda}}$.
        Then $B$ is a local $K_{\lambda}$-algebra with maximal ideal $\mathfrak{p}_{\lambda}B$.
        For every element $a \in R_{\mathfrak{p}_{\lambda}}$, by $\overline{a}$ we will denote its image in $B$.

		We also introduce a local algebra $C_{\lambda}(k)$ defined by 
        $$C_{\lambda}(k) = k\left[ y_{i, j} \mid 1\leqslant i \leqslant n, 0 \leqslant j < \lambda_i \right] / I,$$ 
        where $I$ is generated by nonleading coefficients of the polynomial
        $$
        \prod\limits_{i = 1}^{n} \left( y_{i, 0} + y_{i, 1}t + \ldots + y_{i, \lambda_i - 1} t^{\lambda_i - 1} + t^{\lambda_i} \right).
        $$
        We denote the coefficient of the monomial $t^k$ in the above polynomial by $f_k$.
        Thus, $I = (f_0, \ldots, f_{m})$.
        It will turn out that $A_{\lambda}(K_{\lambda})$, $B$, and $C_{\lambda}(K_{\lambda})$ are isomorphic.
        
        %%%%%%
        
        \begin{lem}\label{lem:AandC}
        	For every field $k$, local $k$-algebras $A_{\lambda}(k)$ and $C_{\lambda}(k)$ are isomorphic.
        \end{lem}
        
        \begin{proof}
        	Consider two polynomial algebras $S_1 = k\left[ y_{i, j} \mid 1\leqslant i \leqslant n, 0 \leqslant j < \lambda_i \right]$ and $S_2 = k[z_1, \ldots, z_{m + 1}]$.
            Then $C_{\lambda}(k) = S_1 / I$, where $I$ is defined above, and $A_{\lambda}(k) \subset A(k) = S_2 / I_S$, where $I_S$ is the ideal generated by all polynomials symmetric with respect to $z_1, \ldots, z_{m + 1}$ without constant terms.
            Furthermore, 
            \[
            I_S = (\sigma_1(z_1, \ldots, z_{m + 1}), \ldots, \sigma_{m + 1}(z_1, \ldots, z_{m + 1})),
            \]
            where $\sigma_d(z_1, \ldots, z_{m + 1})$ denotes the elementary symmetric polynomial in $z_1, \ldots, z_{m + 1}$ of degree $1 \leqslant d \leqslant m + 1$.
            We define a $k$-algebra homomorphism $\alpha \colon S_1 \to S_2$ by 
            \begin{equation}\label{eq:defining}
            \alpha(y_{i, j}) = \sigma_{\lambda_i - j}(z_{\Lambda_i + 1}, \ldots, z_{\Lambda_{i + 1}}).
            \end{equation}
            Since elementary symmetric polynomials are algebraically independent, $\alpha$ is injective.
            Extending $\alpha$ to $S_1[t] \to S_2[t]$ by $\alpha(t) = t$, we obtain
            \begin{align*}
            	&\alpha\left( \prod\limits_{i = 1}^{n} \left( y_{i, 0} + y_{i, 1}t + \ldots + y_{i, \lambda_i - 1} t^{\lambda_i - 1} + t^{\lambda_i} \right) \right) = \\
                &\prod\limits_{i = 1}^{n} \left( \alpha(y_{i, 0}) + \alpha(y_{i, 1})t + \ldots + \alpha(y_{i, \lambda_i - 1}) t^{\lambda_i - 1} + t^{\lambda_i} \right) = \\
                &\prod_{i = 1}^{m + 1} (t + z_i) = \sum\limits_{i = 0}^{m + 1} \sigma_{m + 1 - i}(z_1, \ldots, z_{m + 1}) t^i.
            \end{align*}
            Hence, $\alpha(f_k) = \sigma_{m + 1 - k}(z_1, \ldots, z_{m + 1})$ for $0 \leqslant k \leqslant m$, so $\alpha(I) \subset \alpha(S_1) \cap I_S$.
            We claim that $\alpha(I) = \alpha(S_1) \cap I_S$.
            Let $\alpha(p) \in \alpha(S_1) \cap I_S$, where $p \in S_1$.
            Since $\alpha(p)$ is $\lambda$-symmetric, Lemma~\ref{lem:ideal} implies that 
            there exist $\lambda$-symmetric $a_1, \ldots, a_{m + 1} \in S_2$ such that
            $\alpha(p) = \sum\limits_{j = 1}^{m + 1}a_j \sigma_j(z_1, \ldots, z_{m + 1})$.
            Lemma~\ref{lem:fundamental} implies that there exist $q_1, \ldots, q_{m + 1} \in S_1$ such that $a_j = \alpha(q_j)$ for all $1 \leqslant j \leqslant m + 1$.
            Hence, $\alpha(p) = \alpha\left( \sum\limits_{j = 1}^{m + 1} q_j f_{m + 1 - j} \right)$, so the injectivity of $\alpha$ implies that $p \in I$.
            
            Since $\alpha(I) = \alpha(S_1) \cap I_S$, $\alpha$ defines an injective homomorphism $\beta\colon S_1 / I \to S_2 / I_S$, which gives us the embedding $\beta\colon C_\lambda(k) \to A(k)$.
            Lemma~\ref{lem:fundamental} together with~\eqref{eq:defining} imply that the image $\beta(C_{\lambda}(k))$  coincides with the set of all $\lambda$-symmetric polynomials, so $\beta$ gives a desired isomorphism.
        \end{proof}
    
    	%%%%%%%%%%%%
    
		\begin{lem}\label{lem:CandB}
			There exists a surjective homomorphism $\varphi_{\lambda}\colon C_{\lambda}(K_{\lambda}) \to B$.
		\end{lem}    
        
        \begin{proof}
	        We set $X_i(t) = x_i^{(0)} + x_i^{(1)}t + \ldots + x_i^{(m)}t^m \in R_{\mathfrak{p}_{\lambda}}[t] / \left( t^{m + 1} \right)$.
	        For fixed $i \leqslant n$, we will find $b_{0}, \ldots, b_{m - \lambda_i} \in R_{\mathfrak{p}_{\lambda}}$ such that the polynomial $X_i(t) P_i(t)$, where $P_i(t) = b_0 + b_1 t + \ldots + b_{m - \lambda_i} t^{m - \lambda_i}$, is equal to some monic polynomial of degree $\lambda_i$ modulo $t^{m + 1}$.
        	Then elements $b_{0}, \ldots, b_{m - \lambda_i}$ should satisfy the following linear system
	        $$
	        \begin{pmatrix}
				x_i^{(\lambda_i)} & \ldots &  x_i^{(m - 1)}          & x_i^{(m)}               \\
	            \vdots            & \ddots &  \vdots                 & \vdots                  \\
	            \ldots            & \ldots & x_{i}^{(\lambda_i)}     & x_{i}^{(\lambda_i + 1)} \\
	            \ldots            & \ldots & x_{i}^{(\lambda_i - 1)} & x_{i}^{(\lambda_i)} 
			\end{pmatrix}
	        \begin{pmatrix}
				b_{m - \lambda_i} \\
	            \vdots \\
	            b_1 \\
	            b_0
			\end{pmatrix}
	        =
	        \begin{pmatrix}
				0 \\
	            \vdots \\
	            0 \\
	            1
			\end{pmatrix}.
	        $$
            
        	The matrix $M$ of this system is upper-triangular modulo ideal $\mathfrak{p}_\lambda R_{\mathfrak{p}_{\lambda}}$ with $x_i^{(\lambda_i)}$ on the diagonal, so there is a unique vector $(b_{m - \lambda_i}, \ldots, b_1, b_0) \in R_{\mathfrak{p}_{\lambda}}^{m - \lambda_i + 1}$ satisfying this system.
            Moreover, $b_0 \notin \mathfrak{p}_\lambda R_{\mathfrak{p}_{\lambda}}$.
        
	        We denote the product $X_i(t) P_i(t)$ reduced modulo $t^{m + 1}$ by $Q_i(t) = a_{i, 0} + a_{i, 1}t + \ldots + a_{i, \lambda_i - 1} t^{\lambda_i - 1} + t^{\lambda_i}$.
            Since $x_i^{(0)}, \ldots, x_i^{(\lambda_i - 1)} \in \mathfrak{p}_\lambda R_{\mathfrak{p}_{\lambda}}$, elements $a_{i, 0}, \ldots, a_{i, \lambda_i - 1}$ also lie in $\mathfrak{p}_\lambda R_{\mathfrak{p}_{\lambda}}$, so their images in $B$ are nilpotents.
            We also note that
            \begin{equation}\label{eq:productQ}
            \prod\limits_{i = 1}^n Q_i(t) \equiv \left( \prod\limits_{i = 1}^n X_i(t) \right) \cdot \left( \prod\limits_{i = 1}^n P_i(t)\right) \pmod{t^{m + 1}}.
            \end{equation}

    In what follows, the image of element $a \in R_{\mathfrak{p}_\lambda}$ in $B$ will be denoted by $\overline{a}$.
    By $S$ we denote the $k$-subalgebra of $B$ generated by $\overline{a}_0, \ldots, \overline{a}_{\lambda_i - 1}, \overline{x}_i^{(\lambda_i)}, \ldots, \overline{x}_{i}^{(m)}$, and the inverse of $\overline{x}_{i}^{(\lambda_i)}$, we set also $\mathfrak{p} = \mathfrak{p}_{\lambda} B \cap S$.
    We claim that $\overline{x}_i^{(0)}, \ldots, \overline{x}_i^{(\lambda_i - 1)} \in S$.
    For every~$j$, $\overline{b}_j$ can be written as $\frac{c_j}{c}$, where $c = \det \overline{M}$ and $c_j$ is a polynomial in $\overline{x}_i^{(\lambda_i)}, \ldots, \overline{x}_i^{(m)}$ over $k$.
        	Then the following matrix equality
	        \begin{equation}\label{eq:hensel}
	        \begin{pmatrix}
				c_0    & c_1    & \ldots & c_{\lambda_i - 1} \\
	            0      & c_0    & \ldots & c_{\lambda_i - 1} \\
	            \vdots & \ddots & \ddots & \vdots            \\
	            0      & \ldots & 0      & c_0
			\end{pmatrix}
	        \begin{pmatrix}
				\overline{x}_i^{(\lambda_i - 1)} \\
	            \overline{x}_i^{(\lambda_i - 2)} \\
	            \vdots \\
	            \overline{x}_i^{(0)}
			\end{pmatrix}
	        =
	        \begin{pmatrix}
				c \overline{a}_{i, \lambda_i - 1} \\
	            c \overline{a}_{i, \lambda_i - 2} \\
	            \vdots \\
	            c \overline{a}_{i, 0}
			\end{pmatrix}
	        \end{equation}
	        can be considered as a system of polynomial equations in $\overline{x}_i^{(0)}, \ldots, \overline{x}_{i}^{(\lambda_i - 1)}$ over $S$.
	        We denote the matrix in the left-hand side of~\eqref{eq:hensel} by $N$.
        	Vector $(0, \ldots, 0) \in S^{\lambda_i}$ gives a solution of this system modulo $\mathfrak{p}$.
	        The jacobian matrix of the system~\eqref{eq:hensel} at $(0, \ldots, 0)$ is equal to $N$ modulo~$\mathfrak{p}$.
	        The determinant $\det N = c_0^{\lambda_i}$ at $(0, \ldots, 0)$ is equal to $\left( x_i^{(\lambda_i)} \right)^{\lambda_i(m - \lambda_i)}$ modulo $\mathfrak{p}$, so it is an invertible element of $S$.
            Since $\mathfrak{p}$ is a nilpotent ideal, $S$ is complete with respect to it.
	        Thus, Hensel's Lemma (\cite[Sect. 4]{Kuhlmann}) implies that there exists a unique solution of~\eqref{eq:hensel} in $S^{\lambda_i}$, which is equal to $(0, \ldots, 0)$ modulo $\mathfrak{p}$.
	        Repeating the same argument for algebra $B$ and ideal $\mathfrak{p}_{\lambda}B$, we have that~\eqref{eq:hensel} has a unique solution in $B \supset S$.
	        Hence, these solutions coincide, and are both equal to $\left( \overline{x}_i^{(0)}, \ldots, \overline{x}_{i}^{(\lambda_i - 1)} \right)$.
	        So, $\overline{x}_i^{(0)}, \ldots, \overline{x}_{i}^{(\lambda_i - 1)} \in S$, and $B$ is generated by $\{ a_{i, j} \mid 1 \leqslant i \leqslant n, 0 \leqslant j < \lambda_i \}$ as a $K_\lambda$-algebra.

            Consider $\varphi\colon K_{\lambda}\left[ y_{i, j} \mid 1 \leqslant i \leqslant n, 0 \leqslant j < \lambda_i \right] \to B$ defined by $\varphi(y_{i, j}) = \overline{a}_{i, j}$.
            Since the coefficiets of $\prod_i^n X_i(t)$ generate $J_m(x_1\cdots x_n)$, equality~\eqref{eq:productQ} implies that $\Ker\varphi \supset I$, so we obtain a homomorphism $\varphi_{\lambda}\colon C_{\lambda}(K_{\lambda}) \to B$.
            Since $a_{i, j}$ generate $B$ over $K_{\lambda}$, $\varphi_{\lambda}$ is surjective, so we are done.
        \end{proof}
                
        Let us return to the proof of Theorem~\ref{th:jet_scheme}.
        Lemma~\ref{lem:AandC} and Lemma~\ref{lem:CandB} imply that for every $\lambda \in \mathbb{Z}_{\geqslant 0}^n$ such that $\lambda_1 + \ldots + \lambda_n = m + 1$ inequality 
        \begin{equation}\label{eq:dimension}
        \dim_{K_{\lambda}} \left( R / J_m\left( x_1 \cdots x_n\right) \right)_{\mathfrak{p}_{\lambda}} \leqslant \dim_{K_{\lambda}} A_{\lambda}(K_{\lambda})
        \end{equation}
        holds.
        Moreover, the equality in~\eqref{eq:dimension} would imply that $\varphi_{\lambda}$ is an isomorphism.
        
        The left-hand side of~\eqref{eq:dimension} is the multiplicity of $J_m$ along $\mathfrak{p}_{\lambda}$.
        Every $\mathfrak{p}_{\lambda}$ defines an affine subspace, so has degree one.
        Since $J_m$ is an ideal of codimension $m + 1$ generated by $m + 1$ polynomials, B\'ezout theorem (\cite[Th. 1.7.7]{Hartshorne}) implies that the sum of multiplicities of these prime components is equal to the product of degrees of generating polynomials. 
        The latter is equal to $n^{m + 1}$.
        Summing up inequality~\eqref{eq:dimension} for all $\lambda$ such that $\lambda_1 + \ldots + \lambda_n = m + 1$, we obtain
        $$
        n^{m + 1} \leqslant \sum\limits_{\lambda} \dim A_{\lambda}(K_{\lambda}) = (\text{Lemma~\ref{lem:dimension}}) = \sum\limits_{\lambda_1 + \ldots + \lambda_n = m + 1} \frac{(m + 1)!}{\lambda_1!\cdots \lambda_n!} = n^{m + 1}.
        $$
        Thus, all inequalities of the form~\eqref{eq:dimension} are equalities, so all $\varphi_{\lambda}$ are isomorphisms.
	\end{proof}
    
    \begin{cor}\label{cor:image}
		Let us consider the isomorphism 
        $$
        \psi_{\lambda} \colon \left( R / J_m\left( x_1 \cdots x_n\right) \right)_{\mathfrak{p}_{\lambda}} \to A_{\lambda} \left( K_{\lambda} \right)
        $$ 
        from Theorem~\ref{th:jet_scheme}.
        For every $1 \leqslant i \leqslant n$ and $0 \leqslant j < \lambda_i$, the image $\psi_{\lambda}\left( x_i^{(j)} \right)$ lies in $A_{\lambda}^i \left( K_{\lambda} \right)$.
        
        Moreover, $\nu\left( \psi_{\lambda}\left( x_i^{(j)} \right) \right) = \lambda_i - j$ (see Definition~\ref{def:main}).
	\end{cor}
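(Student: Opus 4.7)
The plan is to track explicitly the image of $\overline{x}_i^{(j)}$ under the isomorphism $\psi_\lambda = \beta \circ \varphi_\lambda^{-1}$ constructed in the proof of Theorem~\ref{th:jet_scheme}. By that construction, $\psi_\lambda$ sends $\overline{a}_{i,k}$ to $\sigma_{\lambda_i - k}(z_{\Lambda_i+1}, \ldots, z_{\Lambda_{i+1}}) \in A_\lambda^i(K_\lambda)$. Since $\overline{b}_0$ is a unit in $B$, the polynomial $P_i(t)$ is invertible in $B[t]/(t^{m+1})$; writing $P_i(t)^{-1} = \sum_r \overline{p}_{i,r} t^r$ and using $X_i(t) \equiv Q_i(t)P_i(t)^{-1} \pmod{t^{m+1}}$, I obtain, for $0 \leq j < \lambda_i$,
\[
\overline{x}_i^{(j)} = \sum_{k=0}^{j} \overline{a}_{i,k}\,\overline{p}_{i,j-k}.
\]

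To prove the first assertion, I would check that each $\overline{p}_{i,r}$ lies in the $K_\lambda$-subalgebra of $B$ generated by $\overline{a}_{i,0}, \ldots, \overline{a}_{i,\lambda_i - 1}$. The Cramer-rule expression $\overline{b}_l = c_l/c$ from Lemma~\ref{lem:CandB} involves only $\overline{x}_i^{(k)}$'s for the fixed index $i$, so inductively applying the same identity to each $\overline{x}_i^{(k)}$ rewrites everything in terms of the $\overline{a}_{i,\cdot}$'s and $K_\lambda$-scalars. Applying $\psi_\lambda$ then gives $\psi_\lambda(\overline{x}_i^{(j)}) \in A_\lambda^i(K_\lambda)$. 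For the $\nu$-computation, since $I_S$ is homogeneous, $\nu$ equals the smallest degree in which the image has a nonzero graded component. The constant term of $\psi_\lambda(\overline{p}_{i,0})$ is $x_i^{(\lambda_i)} \in K_\lambda^\ast$ (the reciprocal of $\overline{b}_0$ modulo $\mathfrak{p}_\lambda$, as computed in Lemma~\ref{lem:CandB}), so the summand with $k=j$ contributes a leading term $x_i^{(\lambda_i)}\,\sigma_{\lambda_i - j}(z_{\Lambda_i+1}, \ldots, z_{\Lambda_{i+1}})$ of degree $\lambda_i - j$, while each summand with $k < j$ contributes only in degree strictly greater than $\lambda_i - j$.

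The key remaining step, and the main obstacle, is to confirm that $\sigma_{\lambda_i - j}(z_{\Lambda_i+1}, \ldots, z_{\Lambda_{i+1}})$ does not lie in $I_S$, so that the above leading term is genuinely nonzero in $A_\lambda$. Since the degenerate case $\lambda_i = \ell$ has already been dispatched at the start of the proof of Theorem~\ref{th:jet_scheme}, the block $\{\Lambda_i+1, \ldots, \Lambda_{i+1}\}$ is a proper subset of $\{1, \ldots, \ell\}$, and the $S_\ell$-invariance of $I_S$ lets me permute variables so that this subset avoids the index $1$. The leading monomial of $\sigma_{\lambda_i - j}$ in the lex order $z_1 > \ldots > z_\ell$ is then a squarefree product of variables whose indices are all at least $2$, which is automatically in normal form with respect to the Gr\"obner basis $h_r(\mathbf{z}_r)$ from Lemma~\ref{lem:grobner_basis} (whose leading monomials are $z_r^r$ with $r \geq 2$). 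Hence $\sigma_{\lambda_i - j}(z_{\Lambda_i+1}, \ldots, z_{\Lambda_{i+1}}) \notin I_S$, yielding $\nu(\psi_\lambda(x_i^{(j)})) = \lambda_i - j$.
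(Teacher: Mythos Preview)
Your overall strategy---tracking $\overline{x}_i^{(j)}$ through the factorization $\psi_\lambda = \beta \circ \varphi_\lambda^{-1}$ and relating it to the $\overline{a}_{i,k}$'s---is the paper's approach. Your $\nu$-computation is correct: the identity $\overline{x}_i^{(j)} = \sum_{k} \overline{a}_{i,k}\,\overline{p}_{i,j-k}$ is the inverse of the system~\eqref{eq:hensel} that the paper uses for its induction on $j$, and your analysis of the lowest-degree contribution goes through. In fact your explicit ``key step'' (showing $\sigma_{\lambda_i - j}(z_{\Lambda_i+1},\ldots,z_{\Lambda_{i+1}}) \notin I_S$ via the Gr\"obner basis of Lemma~\ref{lem:grobner_basis} when $\lambda_i < \ell$) supplies a point the paper leaves implicit when it asserts $\nu(y_{i,j}) = \lambda_i - j$.

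There is, however, a gap in your argument for the first assertion. You claim each $\overline{p}_{i,r}$ lies in the $K_\lambda$-subalgebra of $B$ generated by the $\overline{a}_{i,k}$'s by ``inductively applying the same identity'' to the $\overline{x}_i^{(k)}$'s appearing in the Cramer expression for $\overline{b}_l$. But this is not an induction: the entries of the matrix $M$, and hence the $\overline{b}_l$'s, involve \emph{all} of $\overline{x}_i^{(0)},\ldots,\overline{x}_i^{(\lambda_i-1)}$ at once, and substituting $\overline{x}_i^{(k)} = \sum \overline{a}_{i,\cdot}\,\overline{p}_{i,\cdot}$ reintroduces the very $\overline{p}_{i,\cdot}$'s you are trying to eliminate. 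What you have written down is a fixed-point equation, not a terminating rewriting procedure. Making it terminate (using nilpotence of $\mathfrak{p}_\lambda B$) is exactly the Hensel's-lemma argument already carried out inside the proof of Lemma~\ref{lem:CandB}, which establishes $\overline{x}_i^{(j)} \in S$, the subalgebra generated by the $\overline{a}_{i,k}$'s over the units $\overline{x}_i^{(\lambda_i)},\ldots,\overline{x}_i^{(m)},(\overline{x}_i^{(\lambda_i)})^{-1}$. The paper's proof of the first claim simply cites this; once you have it, applying $\psi_\lambda$ lands immediately in $A_\lambda^i(K_\lambda)$, and the detour through $P_i(t)^{-1}$ is unnecessary for that half of the corollary.
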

    
    \begin{proof}
    	In what follows we will use the notation from the proof of Theorem~\ref{th:jet_scheme}.
    	Let us fix $i$ and $j$ such that $1 \leqslant i \leqslant n$ and $0 \leqslant j < \lambda_i$.
		The proof of Lemma~\ref{lem:CandB} implies that the image of $x_{i}^{(j)}$ in the algebra $C_{\lambda}(K_{\lambda})$ lies in the subalgebra generated by $y_{i, 0}, \ldots, y_{i, \lambda_i - 1}$.
        The construction of the isomorphism between $C_{\lambda}(K_{\lambda})$ and $A_{\lambda}(K_{\lambda})$ (see~\eqref{eq:defining}) implies that the images of $y_{i, 0}, \ldots, y_{i, \lambda_i - 1}$ lie in $A_{\lambda}^i \left( K_{\lambda} \right)$.
    
    	In order to prove the second claim we need to introduce some notation.
    	Due to the proof of Theorem~\ref{th:jet_scheme}, the isomorphism $\psi_{\lambda}$ can be factored as $\psi_{\lambda} = \beta \circ \gamma$, where $\beta\colon C_{\lambda}(K_{\lambda}) \to A_{\lambda}(K_{\lambda})$ is the isomorphism constructed in Lemma~\ref{lem:AandC}, and 
        \[
        \gamma\colon B_{\lambda} = \left( R / J_m\left( x_1 \cdots x_n\right) \right)_{\mathfrak{p}_{\lambda}} \to C_{\lambda}(K_{\lambda})
        \]
        is the inverse to the isomorphism $\varphi_{\lambda}$ constructed in Lemma~\ref{lem:CandB} (its injectivity proved in Theorem~\ref{th:jet_scheme}).
        For elements $b \in B_{\lambda}$ and $c \in C_{\lambda}(K_{\lambda})$, we can define function $\nu$ by $\nu(c) = \nu(\beta(c))$ and $\nu(b) = \nu(\gamma(b))$.
        Our goal can be reformulated as $\nu(x_{i}^{(j)}) \geqslant \lambda_i - j$.
        Definitions of $\beta$ and $\gamma$ imply that $\nu(\overline{a}_{i, j}) = \nu(y_{i, j}) = \lambda_i - j$, and for every invertible element $z$ from $A_{\lambda}(K_{\lambda})$, $B_{\lambda}$, or $C_{\lambda}(K_{\lambda})$ we have $\nu(z) = 0$.
        
        We will prove the second claim of the corollary by induction on $j$.
        For the case $j = 0$, we consider the last equation in~\eqref{eq:hensel}.
        Since $c$ and $c_0$ are invertible, we have $\nu(\overline{x}_i^{(0)}) = \nu(\overline{a}_{i, 0}) = \lambda_i$.
        For an arbitrary $j$, we consider $(\lambda_i - j)$-th equation in~\eqref{eq:hensel}.
        Again, due to invertibility of $c$, $\nu$ of the right-hand side is $\lambda_i - j$.
        The value of $\nu$ of all summands on the left-hand side except $c_0 \overline{x}_{i}^{(j)}$ is larger than $\lambda_i - j$ due to the induction hypothesis.
        Since $c_0$ is invertible, $\nu\left( \overline{x}_i^{(j)} \right) = \lambda_i - j$.
    \end{proof}

%%%%%%%%%%%%%%%%%%%%%%%%%%%%%

\section{Applications to differential algebra}

\begin{thrm}\label{th:degree}
	Let $K$ be a differential field, $(h_1, \ldots, h_n) \in \mathbb{Z}_{\geqslant 0}^n$, and $H = h_1 + \ldots + h_n$.
    By $I_h$ we denote the ideal 
    $$
    \left( y_1\cdots y_n, (y_1\cdots y_n)^{\prime}, \ldots, (y_1\cdots y_n)^{(h)} \right) \subset K\{ y_1, \ldots y_n \}.
    $$
    Then
    \begin{enumerate}
		\item The differential polynomial $y_1^{(h_1)} \cdots y_n^{(h_n)}$ does not lie in the radical of $I_{H - 1}$.
        \item The minimal $d$ such that $\left( y_1^{(h_1)} \cdots y_n^{(h_n)} \right)^d \in I_H$ is equal to $\max\limits_{1\leqslant i \leqslant n} (h_i + 1)(H - h_i) + 1$.
	\end{enumerate}
\end{thrm}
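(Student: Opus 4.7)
The plan is to translate both statements to the polynomial setting via Remark~\ref{rem:jet_to_diff}: the differential isomorphism $\varphi(x_i^{(j)}) = y_i^{(j)}/j!$ identifies $J_h(x_1\cdots x_n)$ with the restriction of $I_h$ to variables of order at most $h$, and $y_1^{(h_1)}\cdots y_n^{(h_n)}$ differs from $x_1^{(h_1)}\cdots x_n^{(h_n)}$ by the nonzero scalar $\prod_i h_i!$.  So it suffices to prove that (1) $x_1^{(h_1)}\cdots x_n^{(h_n)} \notin \sqrt{J_{H-1}(x_1\cdots x_n)}$ and (2) the minimal $d$ with $(x_1^{(h_1)}\cdots x_n^{(h_n)})^d \in J_H(x_1\cdots x_n)$ equals $M+1$, where $M := \max_{1\leq i\leq n}(h_i+1)(H-h_i)$.

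For (1), I invoke \cite{GowardSmith}: the radical $\sqrt{J_{H-1}}$ is the intersection of the primes $\mathfrak{p}_\mu$ with $\sum\mu_i = H$.  Taking $\mu := (h_1,\ldots,h_n)$, the generators of $\mathfrak{p}_\mu$ are the $x_i^{(j)}$ with $j < h_i$, so each factor $x_i^{(h_i)}$ avoids $\mathfrak{p}_\mu$, and primality carries the product outside as well.

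For the upper bound in (2), the primary decomposition $J_H = \bigcap_\lambda Q_\lambda$ (indexed by $\lambda$ with $\sum\lambda_j = H+1$) together with Theorem~\ref{th:jet_scheme} reduces the problem to showing $\psi_\lambda(x_1^{(h_1)}\cdots x_n^{(h_n)})^{M+1} = 0$ in every $A_\lambda(K_\lambda)$.  Since $\sum\lambda_j > \sum h_j$, some index $j^*$ satisfies $\lambda_{j^*} > h_{j^*}$; Corollary~\ref{cor:image} then places $\psi_\lambda(x_{j^*}^{(h_{j^*})})$ in $A_\lambda^{j^*}(K_\lambda)$ with $\nu = \lambda_{j^*} - h_{j^*}$, and Proposition~\ref{prop:nilpotent_zero} annihilates its $(M+1)$-th power once I verify
\[
\lambda_{j^*}(H+1-\lambda_{j^*}) \;\leq\; (h_{j^*}+1)(H-h_{j^*})(\lambda_{j^*} - h_{j^*}).
\]
I plan to prove this by analyzing $g(k) := (h_{j^*}+1)(H-h_{j^*})(k-h_{j^*}) - k(H+1-k)$ as an upward-opening quadratic in $k = \lambda_{j^*}$ with roots $h_{j^*}+1$ and $-h_{j^*}(H-h_{j^*})$, both at most $h_{j^*}+1$, so $g(k) \geq 0$ whenever $k \geq h_{j^*}+1$.

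For the lower bound in (2), I choose $i^*$ attaining the maximum in $M$ and set $\lambda := (h_1,\ldots,h_{i^*}+1,\ldots,h_n)$, so $\lambda_j = h_j$ for $j \neq i^*$ and $\lambda_{i^*} = h_{i^*}+1$.  Each $\psi_\lambda(x_j^{(h_j)})$ with $j \neq i^*$ is then a unit in $A_\lambda(K_\lambda)$, while $\nu(\psi_\lambda(x_{i^*}^{(h_{i^*})})) = 1$.  Using that $I_S$ is homogeneous and the degree-$1$ part of $A_\lambda^{i^*}(K_\lambda)$ is spanned by $s := z_{\Lambda_{i^*}+1}+\cdots+z_{\Lambda_{i^*+1}}$, I write $\psi_\lambda(x_{i^*}^{(h_{i^*})}) = \mu s + b$ with $0 \neq \mu \in K_\lambda$ and every homogeneous component of $b$ of degree at least $2$.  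Setting $d := M = \lambda_{i^*}(\ell - \lambda_{i^*})$ with $\ell = H+1$, every cross term in the binomial expansion of $(\mu s + b)^d$ sits in degree strictly greater than $d$, so the degree-$d$ part equals $\mu^d s^d$, which is nonzero by Proposition~\ref{prop:nilpotent_nonzero}.  Multiplied by the unit contribution of the other factors, this shows $\psi_\lambda(x_1^{(h_1)}\cdots x_n^{(h_n)})^M \neq 0$, whence $(x_1^{(h_1)}\cdots x_n^{(h_n)})^M \notin J_H$.  The hardest steps will be the quadratic inequality in the upper bound and the graded extraction of the degree-$1$ part of $\psi_\lambda(x_{i^*}^{(h_{i^*})})$ in the lower bound; both lean on the homogeneity of $I_S$ and the structure of $A_\lambda$ developed in Section~\ref{sec:symmetric}.
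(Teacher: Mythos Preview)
Your proposal is correct and follows the paper's overall strategy: translate to jet ideals, handle the upper bound in~(2) by localizing at each $\mathfrak{p}_\lambda$ and applying Corollary~\ref{cor:image} together with Proposition~\ref{prop:nilpotent_zero}, and handle the lower bound by choosing the particular $\lambda$ with $\lambda_{i^*}=h_{i^*}+1$ and invoking Proposition~\ref{prop:nilpotent_nonzero}.  Two tactical differences are worth noting.  For~(1) the paper gives an elementary evaluation homomorphism $K\{y_1,\ldots,y_n\}\to K$ (sending $y_i^{(j)}$ to $0$ or $1$) instead of citing the minimal primes of~\cite{GowardSmith}, though it explicitly remarks that your route works.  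For the lower bound in~(2) the paper does \emph{not} go through the isomorphism $\psi_\lambda$ of Theorem~\ref{th:jet_scheme}; instead it writes down an explicit $K$-algebra homomorphism $R_H\to A_\lambda(K)$ sending $x_i^{(j)}$ to $\sigma_{\lambda_i-j}(z_{\Lambda_i+1},\ldots,z_{\Lambda_{i+1}})$, $1$, or $0$, checks directly that $J_H$ lies in the kernel via the factorization $\prod_i(t+z_i)$, and observes that the image of $x_1^{(h_1)}\cdots x_n^{(h_n)}$ is \emph{exactly} $s=z_1+\cdots+z_{h_1+1}$.  This sidesteps your decomposition $\mu s+b$ and the graded extraction of the degree-$M$ component.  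Your route is perfectly valid (the homogeneity of $I_S$ does give the graded structure you need, and $\nu=1$ forces $\mu\neq 0$), but it leans more on the machinery of Section~\ref{sec:jets}, whereas the paper's explicit map is self-contained and lands on $s$ on the nose.
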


\begin{proof}
	The first claim can be deduced from the result of \cite{GowardSmith}, but we prefer to give a more straightforward argument.
    Let us consider a (not differential) homomorphism $\varphi\colon K\{ y_1, \ldots, y_n\} \to K$ defined by 
    $$
    \varphi\left( y_i^{(j)} \right) = 
    \begin{cases}
		0, \mbox{ if } j < h_i, \\
        1, \mbox{ otherwise.}
	\end{cases}
    $$
    For every tuple $(i_1, \ldots, i_n) \in \mathbb{Z}_{\geqslant 0}^n$ such that $i_1 + \ldots + i_n = h \leqslant H - 1$, there exists $j$ such that $i_j < h_j$.
    Hence, $\varphi\left(y_1^{(i_1)} \cdots y_n^{(i_n)}\right) = 0$, so $\varphi\left( \left(y_1\cdots y_n\right)^{(h)} \right) = 0$.
    On the other hand, $\varphi\left( y_1^{(h_1)} \cdots y_n^{(h_n)} \right) = 1$.
    Thus, the first claim is proved.
    
    By $D$ we denote the number $\max\limits_{1\leqslant i \leqslant n} (h_i + 1)(H - h_i)$.
    We want to formulate our problem using the language of jet ideals.
    Since the ideal $\widetilde{I}_H = I_H \cap K\left[ y_i^{(j)} \mid 1 \leqslant i \leqslant n, j \leqslant H\right]$ is also generated by $y_1\cdots y_n, (y_1\cdots y_n)^{\prime}, \ldots, (y_1\cdots y_n)^{(H)}$, it sufficient to consider $\widetilde{I}_H$.
    Similarly to Remark~\ref{rem:jet_to_diff}, the isomorphism 
    $$
    \psi\colon K\left[ y_i^{(j)} \mid 1 \leqslant i \leqslant n, j \leqslant H\right] \to R_H = K\left[ x_i^{(j)} \mid 1 \leqslant i \leqslant n, j \leqslant H\right]
    $$
    defined by $\psi(y_i^{(j)}) = j! x_i^{(j)}$ sends $\widetilde{I}_H$ to the $H$-th jet ideal of $x_1 \cdots x_n$.
    We denote the latter by $J_H$.
    Since $\psi\left( y_1^{(h_1)} \cdots y_n^{(h_n)} \right) = \left( \prod_{i = 1}^n h_i! \right) x_1^{(h_1)} \cdots x_n^{(h_n)}$, 
    it is sufficient to prove that the minimal $d$ such that $\left( x_1^{(h_1)} \cdots x_n^{(h_n)} \right)^d \in J_H$ is equal to $D + 1$.
    
    First, we prove that for every $d > D$
    $$
    \left( x_1^{(h_1)} \cdots x_n^{(h_n)} \right)^d \in J_H.
    $$
    Let us consider a minimal prime of $J_H$.
    Due to the discussion before Theorem~\ref{th:jet_scheme}, it is of the form $\mathfrak{p}_{\lambda}$, where $\lambda = (\lambda_1, \ldots, \lambda_n) \in \mathbb{Z}_{\geqslant 0}^n$ such that $\lambda_1 + \ldots + \lambda_n = H + 1$.
    Theorem~\ref{th:jet_scheme} also implies that 
    $$
    B_{\lambda} = \left( K\{x_1, \ldots, x_n\} / J_H \right)_{\mathfrak{p}_{\lambda}} \cong A_{\lambda} \left( L \right)
    $$
    for some field $L$.
    It is sufficient to show that for all $\lambda$ the image of $\left( x_1^{(h_1)} \cdots x_n^{(h_n)} \right)^d$ in this localization is zero.
    Since $\lambda_1 + \ldots + \lambda_n = H + 1 > h_1 + \ldots + h_n$, there exists $i$ such that $h_i < \lambda_i$.
    Due to Corollary~\ref{cor:image}, $\nu\left( \psi_{\lambda}\left( x_i^{(h_i)} \right) \right) = \lambda_i - h_i$, so Proposition~\ref{prop:nilpotent_zero} implies, that in~$B_{\lambda}$
    $$
    \left( x_i^{(h_i)} \right)^{d_0} = 0, \mbox{ if } d_0 > \frac{\lambda_i}{\lambda_i - h_i}(H + 1 - \lambda_i).
    $$
    Since $H - h_i \geqslant H + 1 - \lambda_i$ and $(\lambda_i - h_i)(h_i + 1) \geqslant \lambda_i$, so we obtain 
    \[
    D \geqslant (h_i + 1)(H - h_i) \geqslant \frac{\lambda_i}{\lambda_i - h_i}(H + 1 - \lambda_i).
    \]
    Hence, $\left( x_i^{(h_i)} \right)^d = 0$ in $B_{\lambda}$.
    
    Now it is sufficient to prove that $\left( x_1^{(h_1)} \cdots x_n^{(h_n)} \right)^D \notin J_H$.
    Without loss of generality, we can assume that $h_1 \geqslant h_2 \geqslant \ldots \geqslant h_n$, so $D = (h_1 + 1)(H - h_1)$.
    Set $\lambda = (h_1 + 1, h_2, \ldots, h_n)$ and 
    \[
    H_i = (h_1 + 1) + h_2 + \ldots + h_{i - 1} = \lambda_1 + \ldots + \lambda_{i - i}
    \]
    for all $i \leqslant n$.
    We define a homomorphism $\psi\colon R_H \to A_{\lambda}(K) \subset K[z_1, \ldots, z_{H + 1}] / I_S$ by
    $$
    \psi\left( x_i^{(j)} \right) = 
    \begin{cases}
		\sigma_{\lambda_i - j} \left( z_{H_i + 1}, \ldots, z_{H_{i + 1}}\right), \mbox{ if } j < \lambda_i, \\
        1, \mbox{ if } j = \lambda_i,\\
        0, \mbox{ otherwise.}
	\end{cases}
    $$
    We claim that $J_H \subset \Ker\psi$.
    Note that coefficients of $1, t, \ldots, t^H$ in the product
    $$
    \prod\limits_{i = 1}^n \left( \psi\left(x_i^{(0)}\right) + \psi\left(x_i^{(1)}\right) t + \ldots + \psi\left(x_i^{(H)}\right) t^H \right) \in A_{\lambda}(K) [t]
    $$
    are exactly images of generators of $J_H$.
    Similarly to the proof of Lemma~\ref{lem:AandC}, we obtain
    \begin{multline*}
	\prod\limits_{i = 1}^n \left( \psi\left(x_i^{(0)}\right) + \psi\left(x_i^{(1)}\right) t + \ldots + \psi\left(x_i^{(H)}\right) t^H \right) = \prod\limits_{i = 1}^{H + 1} (t + z_i) = \\ \sum\limits_{i = 1}^{H + 1} \sigma_{i}(z_1, \ldots, z_{H + 1}) t^{H + 1 - i} + t^{H + 1} = t^{H + 1}.
	\end{multline*}
    Hence, $J_H \subset \Ker\psi$.
    On the other hand, $\psi\left( x_1^{(h_1)}  \ldots x_n^{(h_n)}\right) = z_1 + \ldots + z_{h_1 + 1}$.
    Proposition~\ref{prop:nilpotent_nonzero} implies that $\psi\left( \left( x_1^{(h_1)}  \cdots x_n^{(h_n)} \right)^D \right) \neq 0$, so $\left( x_1^{(h_1)}  \ldots x_n^{(h_n)} \right)^D \notin J_H$.
\end{proof}

\begin{cor}\label{cor:product_of_ideals}
	Let $n$ be a positive integer, and $h_1, \ldots, h_n$ be non negative integers with the sum $H := h_1 + \ldots + h_n$.
    Then for ideals (not necessarily differential) $I_1, \ldots, I_n$ in a differential $K$-algebra $A$ the following inclusion holds
    $$
    \left( I_1^{(h_1)} \cdots I_n^{(h_n)} \right)^{\max(h_i + 1)(H - h_i) + 1} \subset \left( I_1\cdots I_n \right)^{(h_1 + \ldots + h_n)}.
    $$
\end{cor}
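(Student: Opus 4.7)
The plan is to deduce the corollary from the universal inclusion established in Theorem~\ref{th:degree}(2) by a substitution-and-polarization argument, exploiting the fact that an identity holding in the differential polynomial ring $K\{y_1,\ldots,y_n\}$ specializes to any differential $K$-algebra $A$ via the differential homomorphism sending each $y_i$ to an arbitrary element of $A$.

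I would start by unpacking the left-hand side. By the definition of the product of ideals, every element of $(I_1^{(h_1)}\cdots I_n^{(h_n)})^{d}$ (with $d=\max_i(h_i+1)(H-h_i)+1$) is an $A$-linear combination of monomials
\[
M=\prod_{i=1}^{n}\prod_{k=1}^{d}a_{i,k}^{(j_{i,k})},\qquad a_{i,k}\in I_i,\ 0\leq j_{i,k}\leq h_i,
\]
so it suffices to show each such $M$ lies in $(I_1\cdots I_n)^{(H)}$. Next, I would take the universal identity obtained from Theorem~\ref{th:degree}(2),
\[
(y_1^{(h_1)}\cdots y_n^{(h_n)})^{d}=\sum_{j=0}^{H}c_j\,(y_1\cdots y_n)^{(j)},\qquad c_j\in K\{y_1,\ldots,y_n\},
\]
and substitute $y_i\mapsto b_i\in I_i$ to obtain $(b_1^{(h_1)}\cdots b_n^{(h_n)})^{d}\in(b_1\cdots b_n)^{(H)}\subset(I_1\cdots I_n)^{(H)}$ for each individual choice of the $b_i$'s.

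To recover an arbitrary monomial $M$ rather than just a power of a single product, I would polarize: extend scalars to $A[\mu]$ with the $\mu_{i,k}$ treated as differential constants, and substitute $y_i\mapsto\widetilde{a}_i:=\sum_{k}\mu_{i,k}a_{i,k}\in I_iA[\mu]$. The substituted identity shows
\[
(\widetilde{a}_1^{(h_1)}\cdots\widetilde{a}_n^{(h_n)})^{d}\in(\widetilde{a}_1\cdots\widetilde{a}_n)^{(H)}\subset(I_1\cdots I_n)^{(H)}A[\mu].
\]
Since $(I_1\cdots I_n)^{(H)}A[\mu]$ is the extension of a $K$-ideal of $A$ by constants, every coefficient of a $\mu$-monomial on the left belongs to $(I_1\cdots I_n)^{(H)}$. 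Expanding $\widetilde{a}_i^{(h_i)}=\sum_k\mu_{i,k}a_{i,k}^{(h_i)}$ and reading off the coefficient of $\prod_{i,k}\mu_{i,k}$ produces a nonzero rational multiple of $\prod_{i,k}a_{i,k}^{(h_i)}$; invertibility of this multinomial factor in characteristic zero gives the monomial case $j_{i,k}=h_i$.

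The case of a general monomial $M$ with $j_{i,k}\leq h_i$ is handled analogously, invoking Theorem~\ref{th:degree}(2) with $(h_1,\ldots,h_n)$ replaced by $(h_1^*,\ldots,h_n^*)$ where $h_i^*=\max_k j_{i,k}$. Here $H^*=\sum h_i^*\leq H$ and the exponent required is $d^*\leq d$, so the original $d$ still suffices and the target ideal $(I_1\cdots I_n)^{(H^*)}$ sits inside $(I_1\cdots I_n)^{(H)}$. I expect the main technical obstacle to be the case where $j_{i,k}$ varies with $k$ for a fixed $i$: a single polarization only produces permutation-symmetric sums of monomials, so one must either refine the polarization (introducing formal parameters tracking both $k$ and the derivative order) or iterate Leibniz-type identities to reduce mixed-order columns to uniform-order columns modulo already-handled ideals. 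Throughout, the invertibility of the resulting combinatorial coefficients relies on $K$ being of characteristic zero.
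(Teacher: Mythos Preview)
Your overall strategy---specialize the universal membership of Theorem~\ref{th:degree}(2) via a differential homomorphism and then polarize with constant parameters---is exactly the mechanism underlying the reduction the paper uses. The paper, however, does not carry this reduction out: it quotes \cite[Cor.~6.2 and Lemma~6.3]{OvchinnikovPogudinVo} for the implication ``$(y_1^{(h_1)}\cdots y_n^{(h_n)})^s\in I_H$ in $K\{y_1,\ldots,y_n\}$ $\Longrightarrow$ $(I_1^{(h_1)}\cdots I_n^{(h_n)})^s\subset(I_1\cdots I_n)^{(H)}$ for all ideals in a differential polynomial ring,'' and then passes to an arbitrary differential $K$-algebra by choosing finitely many generators and mapping from a free object. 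So structurally your argument and the paper's coincide; you are attempting to supply by hand what the paper outsources to that reference.

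The gap is precisely the one you flag, and it is genuine. Your polarization, even after replacing $h_i$ by $h_i^\ast=\max_k j_{i,k}$, only produces the coefficient $\prod_{i,k}a_{i,k}^{(h_i^\ast)}$ from $\prod_i\bigl(\sum_k\mu_{i,k}a_{i,k}^{(h_i^\ast)}\bigr)^d$---still a \emph{single} derivative order per index $i$, not the mixed product $\prod_{i,k}a_{i,k}^{(j_{i,k})}$. Neither of your proposed fixes closes this cleanly. Adding parameters ``tracking the derivative order'' cannot help, because any differential homomorphism sends $y_i^{(m)}$ to the $m$-th derivative of the image of $y_i$; you cannot independently assign orders to different copies of the same $y_i$. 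And Leibniz rewriting such as $ab'=(ab)'-a'b$ tends to shuffle the low-order factor among the blocks rather than eliminate it: already for $n=2$, $h_1=h_2=1$, $d=3$, repeatedly rewriting $a_1\,b_1'\,a_2'\,b_2'\,a_3'\,b_3'$ this way returns, modulo $(I_1I_2)^{(2)}$, to a monomial of the same shape. What the cited lemma actually does is pass first to a free differential ring $K\{z_{i,k}:1\le i\le n,\,1\le k\le d\}$ with a \emph{separate} differential variable for each pair $(i,k)$, establish the ideal inclusion there, and only then specialize $z_{i,k}\mapsto a_{i,k}$; your single-variable-per-block polarization does not reach that.
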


\begin{proof}
	In Section~5 of \cite{OvchinnikovPogudinVo} (see Corollary 6.2 and Lemma 6.3) it was proved that if for some~$s$
    $$
    \left( x_1^{(h_1)} \cdots x_n^{(h_n)} \right)^s \in \left( x_1\cdots x_n, (x_1\cdots x_n)^{\prime}, \ldots, (x_1\cdots x_n)^{(h_1 + \ldots + h_n)}  \right) \subset K\{ x_1, \ldots, x_n \},
    $$
    then for the same $s$ the inclusion
    \begin{equation}\label{eq:inclusion_polys}
    \left( I_1^{(h_1)} \cdots I_n^{(h_n)} \right)^{s} \subset \left( I_1\cdots I_n \right)^{(h_1 + \ldots + h_n)}
    \end{equation}
    holds for any ideals in any differential polynomial ring $K\{y_1, \ldots, y_N\}$.
    
    We claim that the same inclusion also holds in any commutative associative differential $K$-algebra $A$.
    Let $I_1, \ldots, I_n$ be ideals in $A$.
    Let $a$ be any element of $\left( I_1^{(h_1)} \cdots I_n^{(h_n)} \right)^{s}$. 
    Then there are finitely generated ideals $J_1 \subset I_1$, $\ldots$, $J_n \subset I_n$ such that $a \in \left( J_1^{(h_1)} \cdots J_n^{(h_n)} \right)^{s}$.
    Let $a_1, \ldots, a_N$ be the union of  the sets of generators of $J_1, \ldots, J_n$.
    Consider a homomorphism of differential $K$-algebras $\varphi\colon K\{y_1, \ldots, y_N\} \to A$ defined by $\varphi(y_i) = a_i$.
    Then~\eqref{eq:inclusion_polys} implies that every $b \in \varphi^{-1}(a)$ belongs to $\left( \varphi^{-1}(J_1)\cdots \varphi^{-1}(J_n) \right)^{(h_1 + \ldots + h_m)}$.
    Applying $\varphi$, we conclude that \[a \in \left( J_1 \cdots J_n \right)^{(h_1 + \ldots + h_m)} \subset \left( I_1 \cdots I_n \right)^{(h_1 + \ldots + h_m)}, \] so the claim is proved.
    
    Now the corollary follows directly from Theorem~\ref{th:degree}.
\end{proof}

\begin{remark}
	Theorem~\ref{th:degree} also shows that the degree in Corollary~\ref{cor:product_of_ideals} can not be lower.
    As an example we can take principal ideals $I_1 = (x_1)$, $I_2 = (x_2)$, $\ldots$, $I_n = (x_n)$ in $K\{ x_1, \ldots, x_n\}$.
\end{remark}

Analogously to~\cite[Corollary~6.3]{OvchinnikovPogudinVo} one can use the inclusions
\[
\left( J_1 \cap \ldots \cap J_n \right)^n \subset J_1 \cdots J_n \subset J_1 \cap \ldots \cap J_n
\]
valid for arbitrary ideals in any $K$-algebra
to deduce the following result from Corollary~\ref{cor:product_of_ideals}.

\begin{cor}\label{cor:intersection}
	Let $n$ be a positive integer, and $h_1, \ldots, h_n$ be non negative integers with the sum $H := h_1 + \ldots + h_n$.
    Then for ideals (not necessarily differential) $I_1, \ldots, I_n$ in a differential $K$-algebra $A$, the following inclusion holds
    $$
    \left( I_1^{(h_1)} \cap \ldots \cap I_n^{(h_n)} \right)^{n \max(h_i + 1)(H - h_i) + n} \subset \left( I_1\cap \ldots \cap I_n \right)^{(h_1 + \ldots + h_n)}.
    $$
\end{cor}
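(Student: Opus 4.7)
The plan is to chain together the elementary double inclusion $(J_1 \cap \ldots \cap J_n)^n \subset J_1 \cdots J_n \subset J_1 \cap \ldots \cap J_n$ with Corollary~\ref{cor:product_of_ideals}, exactly as the remark preceding the statement suggests. Set $d := \max_i (h_i+1)(H-h_i)+1$, so that the exponent appearing on the left-hand side of the target inclusion is $nd$.

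First, I would apply the inclusion $(J_1 \cap \ldots \cap J_n)^n \subset J_1 \cdots J_n$ with $J_i := I_i^{(h_i)}$. This gives
\[
\bigl( I_1^{(h_1)} \cap \ldots \cap I_n^{(h_n)} \bigr)^n \subset I_1^{(h_1)} \cdots I_n^{(h_n)}.
\]
Raising both sides to the $d$-th power and invoking Corollary~\ref{cor:product_of_ideals} yields
\[
\bigl( I_1^{(h_1)} \cap \ldots \cap I_n^{(h_n)} \bigr)^{nd} \subset \bigl( I_1^{(h_1)} \cdots I_n^{(h_n)} \bigr)^d \subset (I_1\cdots I_n)^{(H)}.
\]

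For the final step I would use the other half of the double inclusion, namely $I_1 \cdots I_n \subset I_1 \cap \ldots \cap I_n$, together with the obvious monotonicity of the operation $(-)^{(H)}$: if $J \subset J'$ then every generator of $J^{(H)}$, being a derivative of order at most $H$ of an element of $J$, lies in $J'^{(H)}$. Combining these gives
\[
(I_1\cdots I_n)^{(H)} \subset (I_1 \cap \ldots \cap I_n)^{(H)},
\]
and chaining with the previous inclusion produces the desired containment with exponent $nd = n\max_i(h_i+1)(H-h_i)+n$.

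There is no real obstacle here; the argument is a formal manipulation and the only thing to be careful about is that the monotonicity $J \subset J' \Rightarrow J^{(H)} \subset J'^{(H)}$ is used on the right-hand side, which is immediate from the definition of $(-)^{(H)}$ as the ideal generated by derivatives of order $\leqslant H$ of elements of the ideal. All the genuine content has already been packaged into Corollary~\ref{cor:product_of_ideals}.
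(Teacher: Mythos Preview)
Your proposal is correct and follows exactly the approach the paper indicates: chain the double inclusion $(J_1 \cap \ldots \cap J_n)^n \subset J_1 \cdots J_n \subset J_1 \cap \ldots \cap J_n$ with Corollary~\ref{cor:product_of_ideals}, using monotonicity of $(-)^{(H)}$ for the final step. You have simply written out in full the argument the paper only sketches.
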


\begin{remark}\label{rem:levi}
	Theorem~\ref{th:degree} also gives a partial solution for classical membership problem for differential ideal $[x_1\ldots x_n] \in K\{x_1, \ldots, x_n\}$.
    More precisely, it implies that if for every $h_1, \ldots, h_n \in \mathbb{Z}_{\geqslant 0}$ and $H = h_1 + \ldots + h_n$
    \begin{equation}\label{eq:diff_membership}
	    \left( x_1^{(h_1)}\cdots x_n^{(h_n)} \right)^d \in [x_1 \cdots x_n],
	\end{equation}
    where $d = \max\limits_{1\leqslant i \leqslant n} (h_i + 1)(H - h_i) + 1$.
    Possibly, the result of such form can be also deduced using techniques of \cite{HillmanEtAl2}, but it is not straightforward.
    
    Let us also note, that unlike Theorem~\ref{th:degree} the degree $d$ in~\eqref{eq:diff_membership} is not optimal in general.
    In other words, taking more prolongations, it is possible to lower the degree $d$.
    For example, in the case $n = 2$ and $h_1 = h_2 = 2$ the result of Levi (\cite[Th. 3.2]{Levi}) implies that $\left( x_1^{(2)} x_2^{(2)}\right)^5 \in [x_1x_2]$.
\end{remark}

\section*{Acknowledgements}

This work was supported by the Austrian Science Fund FWF grant Y464-N18 and NSF grants CCF-1563942, CCF-0952591, DMS-1606334.

The author is grateful to Alexey Ovchinnikov and Thieu Vo Ngoc for useful discussions and to the referee for numerous comments and suggestions, which helped to improve the paper.

%%%%%%%%%%%%%%%%%%%%%%%%%%%%%

%\small
\setlength{\bibsep}{2pt}
%\section*{\refname}
\bibliographystyle{abbrvnat}
\bibliography{bibdata}

\begin{thebibliography}{16}
\providecommand{\natexlab}[1]{#1}
\providecommand{\url}[1]{\texttt{#1}}
\expandafter\ifx\csname urlstyle\endcsname\relax
  \providecommand{\doi}[1]{doi: #1}\else
  \providecommand{\doi}{doi: \begingroup \urlstyle{rm}\Url}\fi

\bibitem[{Goward Jr.} and Smith(2006)]{GowardSmith}
R.~{Goward Jr.} and K.~Smith.
\newblock The jet scheme of a monomial scheme.
\newblock \emph{Comm. Algebra}, 35\penalty0 (5):\penalty0 1591--1598, 2006.
\newblock URL \url{http://dx.doi.org/10.1080/00927870500454927}.

\bibitem[Green et~al.(2017)Green, Kreymer, and Viola]{BlockSymmetric}
F.~Green, D.~Kreymer, and E.~Viola.
\newblock Block-symmetric polynomials correlate with parity better than
  symmetric.
\newblock \emph{computational complexity}, 26\penalty0 (2):\penalty0 323--364,
  2017.
\newblock URL \url{https://doi.org/10.1007/s00037-017-0153-3}.

\bibitem[Gustavson et~al.(2016)Gustavson, Kondratieva, and
  Ovchinnikov]{GustavsonKondratievaOvchinnikov}
R.~Gustavson, M.~Kondratieva, and A.~Ovchinnikov.
\newblock New effective differential {N}ullstellensatz.
\newblock \emph{Adv. Math.}, 290:\penalty0 1138--1158, 2016.
\newblock URL \url{http://dx.doi.org/10.1016/j.aim.2015.12.021}.

\bibitem[Hartshorne(1977)]{Hartshorne}
R.~Hartshorne.
\newblock \emph{Algebraic Geometry}, volume~52 of \emph{Grad. Texts in Math.}
\newblock Springer New York, 1977.

\bibitem[Hillman et~al.(1966{\natexlab{a}})Hillman, Mead, O'Keefe, and
  O'Keefe]{HillmanEtAl}
A.~Hillman, D.~Mead, K.~O'Keefe, and E.~O'Keefe.
\newblock Ideals generated by products.
\newblock \emph{Proc. Amer. Math. Soc.}, 17\penalty0 (3):\penalty0 717--719,
  1966{\natexlab{a}}.
\newblock URL \url{http://www.jstor.org/stable/2035398}.

\bibitem[Hillman et~al.(1966{\natexlab{b}})Hillman, Mead, O'Keefe, and
  O'Keefe]{HillmanEtAl2}
A.~Hillman, D.~Mead, K.~O'Keefe, and E.~O'Keefe.
\newblock A dynamic programming generalization of $xy$ to $n$ variables.
\newblock \emph{Proc. Amer. Math. Soc.}, 17\penalty0 (3):\penalty0 720--723,
  1966{\natexlab{b}}.
\newblock URL \url{http://www.jstor.org/stable/2035399}.

\bibitem[Kuhlmann(1997)]{Kuhlmann}
F.-V. Kuhlmann.
\newblock Valuation theoretic and model theoretic aspects of local
  uniformization.
\newblock In \emph{Resolution of Singularities: a research textbook in tribute
  to Oskar Zariski}, volume 181 of \emph{Progr. in Math.} Birkh\"aiser, 1997.

\bibitem[{Le\'on S\'anchez}(2016)]{LeonSanchez}
O.~{Le\'on S\'anchez}.
\newblock On the model companion of partial differential fields with an
  automorphism.
\newblock \emph{Israel J. Math.}, 212\penalty0 (1):\penalty0 419--442, 2016.
\newblock URL \url{http://dx.doi.org/10.1007/s11856-016-1292-y}.

\bibitem[Levi(1942)]{Levi}
H.~Levi.
\newblock On the structure of differential polynomials and on their theory of
  ideals.
\newblock \emph{Trans. Amer. Math. Soc.}, 51:\penalty0 532--568, 1942.
\newblock URL \url{http://www.jstor.org/stable/1990077}.

\bibitem[Manivel(2001)]{Manivel}
L.~Manivel.
\newblock \emph{Symmetric functions, Shubert polynomials and degeneracy loci},
  volume~6 of \emph{SMF/AMS Texts and Monographs}.
\newblock 2001.

\bibitem[Monk(1959)]{Monk}
D.~Monk.
\newblock The geometry of flag manifolds.
\newblock \emph{Proc. London Math. Soc. (3)}, \penalty0 (9):\penalty0 253--268,
  1959.

\bibitem[Moosa and Scanlon(2010)]{MoosaScanlon}
R.~Moosa and T.~Scanlon.
\newblock Jet and prolongation spaces.
\newblock \emph{J. Ints. Math. Jussieu}, 9\penalty0 (2):\penalty0 391--430,
  2010.
\newblock URL \url{http://dx.doi.org/10.1017/S1474748010000010}.

\bibitem[Mora and Sala(2003)]{MoraSala}
T.~Mora and M.~Sala.
\newblock On the {G}r\"obner bases of some symmetric systems and their
  application to coding theory.
\newblock \emph{J. Symbolic Comput.}, 35\penalty0 (2):\penalty0 177--194, 2003.
\newblock URL \url{http://dx.doi.org/10.1016/S0747-7171(02)00131-1}.

\bibitem[Mustata(2001)]{Mustata}
M.~Mustata.
\newblock Jet schemes of locally complete intersection canonical singularities.
\newblock \emph{Invent. Math.}, 145\penalty0 (3):\penalty0 397--424, 2001.
\newblock URL \url{http://dx.doi.org/10.1016/S0747-7171(02)00131-1}.

\bibitem[{Ovchinnikov} et~al.(2016){Ovchinnikov}, {Pogudin}, and
  {Vo}]{OvchinnikovPogudinVo}
A.~{Ovchinnikov}, G.~{Pogudin}, and N.~T. {Vo}.
\newblock Upper bound for effective differential elimination.
\newblock Submitted., 2016.
\newblock URL \url{https://arxiv.org/abs/1610.04022}.

\bibitem[Yeun(2007)]{Yeun}
C.~Yeun.
\newblock The multiplicity of jet schemes of a simple normal crossing divisor.
\newblock \emph{Comm. Algebra}, 35\penalty0 (12):\penalty0 3909--3911, 2007.
\newblock URL \url{http://dx.doi.org/10.1080/00927870701512168}.

\end{thebibliography}

\end{document}